\newcommand{\al}{\alpha}
\newcommand{\dd}{{\rm d}}
\theoremstyle{plain}
\newtheorem{theorem}{Theorem}[section]
\newtheorem{proposition}[theorem]{Proposition}
\newtheorem{corollary}[theorem]{Corollary}
\theoremstyle{definition}
\newtheorem{remark}[theorem]{Remark}
\begin{document}

\begin{center}\emph{}
\LARGE
\textbf{Explicit solutions related to the Rubinstein binary-alloy solidification problem with a heat flux or a convective condition at the fixed face}
\end{center}


\begin{center}
{\sc Lucas D. Venturato, Mariela B. Cirelli, Domingo A. Tarzia\\
 CONICET - Depto. Matem\'atica, FCE, Universidad Austral, Paraguay 1950, S2000FZF Rosario, Argentina \\
(LVenturato@austral.edu.ar, cirelli@fceia,edu.ar, DTarzia@austral.edu.ar)}
\vspace{0.2cm}
\end{center}
      
\small


\noindent \textbf{Abstract:} 
Similarity solutions for the two-phase Rubinstein binary-alloy solidification problem in a semi-infinite material are developed. These new explicit solutions are obtained by considering two cases: a heat flux or a convective boundary conditions at the fixed face, and the necessary and sufficient conditions on data are also given in order to have an instantaneous solidification process. We also show that all solutions for the binary-alloy solidification problem are equivalent under some restrictions for data. Moreover, this implies that the coefficient which characterizes the solidification front for the Rubinstein solution must verify an inequality as a function of all thermal and boundary conditions.

\noindent \textbf{Keywords:} Stefan problem, solidification, free boundary problem, phase change process, binary alloy, explicit solution, liquidus and solidus curves, similarity \\

\noindent \textbf{MSC2010:} 35R35, 80A22, 35C05 \\


\section*{Nomenclature}
$\begin{array}{ll}
C_0 & \text{ initial concentration} \\
d & \text{ mass diffusion} \\
f_l & \text{ liquidus curve} \\
f_s & \text{ solidus curve} \\
h_0 & \text{ coefficient that characterizes the heat transfer condition at } x=0 \\
k & \text{ thermal conductivity} \\
q_0 & \text{ coefficient that characterizes the heat flux at } x=0 \\
s(t) & \text{ position of the solidification front at time } t \\
t & \text{ time} \\
T_0 & \text{ initial temperature  } \\
T_k & \text{ critical temperature of solidification} \\
\alpha & \text{ thermal diffusivity} \\
\gamma & \text{ latent heat of fusion by density of mass} \\
\lambda & \text{ constant which characterizes the moving boundary for heat flux boundary condition} \\
\delta & \text{ constant which characterizes the moving boundary for convective boundary condition} \\
\rho & \text{ mass density}
\end{array}$

\section*{Subscript}
$\begin{array}{ll}
s & \text{ solid phase} \\
l & \text{ liquid phase}
\end{array}$

\section{Introduction}
Heat transfer during the solidification of alloys has been of particular interest in many engineering applications, especially in the field of casting, welding, thermal energy storage systems, crystal growth in semiconductors.

A semi-infinite material of a binary alloy consisting of two components $ A $ and $ B $ is considered. Let $ C $ be the concentration of component $ B $, and $ T $ the temperature. We assume that the solidification of the alloy is governed by a phase equilibrium diagram consisting of a “liquidus” curve $ C = f_l (T) $ and a “solidus” curve $ C = f_s (T) $. We suppose that $ f_s $ and $ f_l $ are increasing functions in the variable $ T $ assuming that they verify the following inequalities:
\begin{equation*}
f_l (T_A) = f_s (T_A) <f_l (T) <f_s (T) <f_l (T_B) = f_s (T_B),
\end{equation*}
where $ T_A $ and $ T_B $ are the melting temperatures of $ A $ and $ B $ respectively. The material is in the solid phase if $ C> f_s (T) $ and in the liquid phase if $ C <f_l (T) $. When $ C $ is between $ f_s (T) $ and $ f_l (T) $ the state of the material is not well defined and it is known by mushy region according to the description of the model proposed in \cite{Rubi, SoWiAl, Wi:1978, WiSoAl:1982} which can by appreciated in the Figure \ref{fig1}.

The alloy is considered to be initially in a liquid state at constant temperature $ T_0 $ and constant concentration $ C_0 $. Then, a heat flow characterized by the constant $ q_0 $ is imposed on the fixed face $ x = 0 $ and a front of solidification $ x = s (t) $ begins instantly separating the alloy in solid state ($0< x < s(t) $) and liquid state ($ x> s(t) $). The mathematical formulation of this crystallization process consists of finding the temperature $ T = T (x, t) $ and the concentration $ C = C (x, t) $, both defined for $ x> 0 $ and $ t> 0 $, the free boundary $ x = s (t) $, defined for $ t> 0 $, and the critical solidification temperature $ T_k $ such that the following conditions are satisfied (problem $(P_1)$):
\begin{equation}\label{PS-AB}
 \begin{array}{llll}
i. & \al_s T_{s_{xx}}=T_{s_t} & 0<x<s(t),& t>0,\\
ii. & \al_l T_{l_{xx}}=T_{l_t} & s(t)<x,& t>0,\\
iii. & d_s C_{s_{xx}}=C_{s_t} & 0<x<s(t),& t>0,\\
iv. & d_l C_{l_{xx}}=C_{l_t} & s(t)<x,& t>0,\\
v. & k_s T_{s_x}(0,t)=\frac{q_0}{\sqrt{t}} & (q_0>0),& t>0,\\
vi. & T_s(s(t),t)= T_l(s(t),t)=T_k & & t>0,\\
vii. & T_l(x,0)=T_0 & T_A<T_0<T_B,& x>0,\\
viii. & T_l(\infty,t)=T_0 & T_A<T_0<T_B,& t>0,\\
ix. & C_{s_x}(0,t)= 0 & & t>0,\\
x. & C_s(s(t),t)= f_s(T_k) & & t>0,\\
xi. & C_l(x,0)=C_0 &  x>0, &\\
xii. & C_l(s(t),t)= f_l(T_k) & & t>0,\\
xiii. & k_s T_{s_x}(s(t),t)-k_l T_{l_x}(s(t),t)=\gamma\rho s'(t) & & t>0,\\
xiv. & d_l C_{l_x}(s(t),t)-d_s C_{s_x}(s(t),t)=\big[f_s(T_k)-f_l(T_k)\big] s'(t) & & t>0,\\
\end{array}
\end{equation}
where $ \rho, k, \al, d, \gamma $ represent the mass density, the thermal conductivity, the thermal diffusivity, the mass diffusion, and the latent heat of fusion, with $ s $ and $ l $ denoting the solid and the liquid phase respectively. When the condition on the fixed boundary $ x = 0 $ is given by a constant temperature $ T_A <T_s (0, t) = T_1 <T_0 $ (instead of condition (1.v)) the corresponding solidification problem was solved in \cite{Rubi, SoWiAl, WiSoAl:1982}.

A study of binary-alloy problems can be seen in \cite{Alexi:1985, CarMath, JaRaMa, PlaPleVan:2021, SunVo:1993, Tao:1980, White:1985, WiSoAl:1984}.
Recent works on the solidification of a binary alloy are \cite{AlMa:2006, AsVyMi:2021, CarMath, ChuLeeRoYoo, LeeAlexHuang:2012, MoSa:2020, PaMaWeKa:2020, Petrova:2010, PlaPleVan:2021, Sobolev:2015, Vo:2006, Vo:2008Bin, Vo:2008Num, ZiHetSlo:2017}.
The heat flux condition (1.v) imposed at the fixed face was first considered in \cite{Tar:1981} for a two-phase Stefan problem for a semi-infinite homogeneous material.
In \cite{BaTa:1985} the same heat flux condition was considered when a density jump is supposed for the two-phase Stefan problem.

We can also consider the problem $(P_2)$ which consists in finding the temperature $ T = T (x, t) $ and the concentration $ C = C (x, t) $, both defined for $ x> 0 $ and $ t> 0 $, the free boundary $ x = s (t) $, defined for $ t> 0 $, and the critical solidification temperature $ T_k $, such that the following conditions \eqref{PS-AB}(i)-(iv),(vi)-(xiv), and $(v')$ are verified, where
\begin{equation*}
v'. \quad k_s T_{s_x}(0,t)=\frac{h_0}{\sqrt{t}}(T_s(0,t)-T_1), \quad (h_0>0),\quad t>0.
\end{equation*}
The convective condition $(1. \, v')$ imposed at the fixed face was considered in \cite{Ta:2017} for a two-phase Stefan problem for a semi-infinite homogeneous material.

The goal of this paper is to find the necessary and / or sufficient conditions for data (initial and boundary conditions, and thermal coefficients of the binary alloy) in order to obtain an instantaneous phase-change process with the corresponding explicit solution of the similarity type. A review of explicit solution for Stefan-like problems is \cite{Tarzia}.

In Section \ref{simsol1} we obtain the necessary and sufficient condition \eqref{cotasq0} for problem $(P_1)$ in order to obtain the explicit solution \eqref{FBP1}-\eqref{TkyM}.
In Section \ref{equivprob1} we deduce the inequality \eqref{desigserflambda} for the coefficient $\mu$ that characterizes the Rubinstein free boundary $ x = s (t) $ for problem $(P_1)$ defined in \eqref{FBP1}.
In Section \ref{simsol2} we obtain the necessary and sufficient condition \eqref{cotash0} for problem $(P_2)$ in order to obtain the explicit solution \eqref{FBP1N}-\eqref{eqforlambdaN}.
Finally, in Section \ref{equivprob2} we deduce the inequality \eqref{desigserflambdaN} for the coefficient $\mu$ that characterizes the Rubinstein free boundary $ x = s (t) $ for problem $(P_2)$ defined in \eqref{FBP1N}.

\begin{figure}[t]
\centering
\includegraphics[width=.6\textwidth]{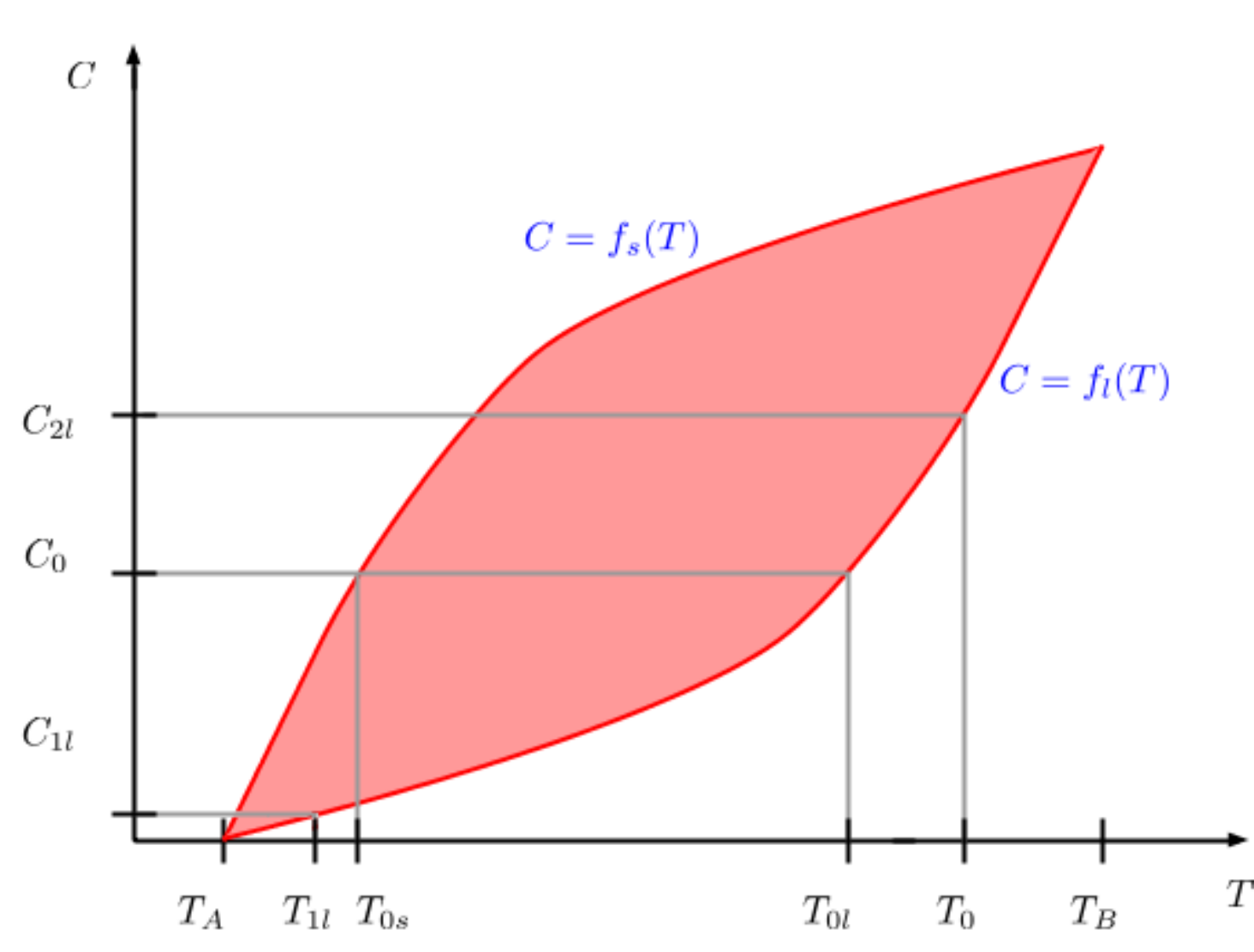}
\caption{Concentration vs. Temperature (phase equilibrium diagram with liquidus and solidus curves)}\label{fig1}
\end{figure}

\section{Explicit solution for the solidification of a binary alloy with a heat flux boundary condition}\label{simsol1}
In this Section we consider the problem $(P_1)$ defined by differential equations and conditions \eqref{PS-AB}(i)-(xiv).
\begin{theorem}\label{teoprincipal}
If $ q_0 $ verifies the following inequality
\begin{equation}\label{cotasq0}
\frac{(T_0-T_{0l})k_l}{\sqrt{\pi \al_l}}<q_0<\frac{(T_0-T_{0s})k_l}{\sqrt{\pi \al_l}}
\end{equation}
where $T_{0l}=f_l^{-1}(C_0)$ and $T_{0s}=f_s^{-1}(C_0)$, with $f_l^{-1}(C)=T$ is the inverse function of $f_l$ and $f_s^{-1}(C)=T$ is the inverse function of $f_s$ respectively, then there exists a unique solution of the similarity type for the free boundary problem $(P_1)$ which is given by:
\begin{equation}\label{FBP1}
s(t)=2\lambda \sqrt{\al_s t}, \quad t>0,
\end{equation}
\begin{equation}\label{FormTs}
T_s(x,t)=\bigg(T_k-\frac{q_0}{k_s} \sqrt{\pi\al_s}erf(\lambda)\bigg)+\frac{q_0}{k_s}\sqrt{\pi\al_s} erf\bigg(\frac{x}{2 \sqrt{\al_s t}}\bigg), \quad 0<x<s(t),\quad t>0,
\end{equation}
\begin{equation}\label{FormTl}
T_l(x,t)=T_0+\frac{T_k-T_0}{erfc\bigg(\frac{\sqrt{\al_s}}{\sqrt{\al_l}}\lambda\bigg)}+\frac{T_0-T_k}{erfc\bigg(\frac{\sqrt{\al_s}}{\sqrt{\al_l}}\lambda\bigg)}erf\bigg(\frac{x}{2 \sqrt{\al_l t}}\bigg), \quad s(t)<x,\quad t>0,
\end{equation}
\begin{equation}\label{FormCs}
C_s(x,t)=f_s(T_k), \quad 0<x<s(t),\quad t>0,
\end{equation}
\begin{equation}\label{FormCl}
C_l(x,t)=C_0+\frac{f_l(T_k)-C_0}{erfc\bigg(\frac{\sqrt{\al_s}}{\sqrt{d_l}}\lambda\bigg)}erfc\bigg(\frac{x}{2 \sqrt{d_l t}}\bigg), \quad s(t)<x,\quad t>0,
\end{equation}
where the unknowns $ T_k $ and $ \lambda $ (coefficient that characterizes the free boundary $ x = s (t) $) must satisfy the following equations:
\begin{equation}\label{TkyM}
T_k=F(\lambda),\quad M(\lambda)=\phi(T_k),\quad \lambda>0,\quad T_A<T_k<T_B,
\end{equation}
where the real functions $ F $, $ M $ and $ \phi $ are defined as follows:
\begin{equation*}
erf(x)=\frac{2}{\sqrt{\pi}}\int_0^x e^{-t^2}\dd t;\quad erfc(x)=1-erf(x),
\end{equation*}
\begin{equation}\label{defQF1F}
\begin{split}
Q(x)=\sqrt{\pi} x e^{x^2}erfc(x), \quad F_1(x)=erfc(x)e^{x^2},\quad x>0\\
\\
F(x)=T_0+\frac{\gamma \rho \al_l}{k_l}  Q\bigg(\frac{\sqrt{\al_s}}{\sqrt{\al_l}}x\bigg)-\frac{q_0}{k_l} \sqrt{\al_l \pi}e^{-x^2}F_1\bigg(\frac{\sqrt{\al_s}}{\sqrt{\al_l}}x\bigg),\quad x>0,
\end{split}
\end{equation}
\begin{equation*}
 M(x)=\bigg[Q\bigg(\frac{\sqrt{\al_s}}{\sqrt{d_l}}x\bigg)\bigg]^{-1},\quad x>0, \quad \quad
\phi(x)=\frac{f_s(x)-f_l(x)}{C_0-f_l(x)},\quad x\in (T_A,T_{0l})\cup(T_{0l},T_B).	
\end{equation*}
\end{theorem}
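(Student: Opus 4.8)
The plan is to construct the solution by the classical similarity method and then to reduce the two interface (Stefan) balances to the coupled system \eqref{TkyM}, whose solvability is the only nontrivial point. First I would introduce the similarity variable $\eta=\frac{x}{2\sqrt{\al_s t}}$ (and its analogues with $\al_l,d_s,d_l$), so that each of the four diffusion equations \eqref{PS-AB}(i)--(iv) becomes a linear second-order ODE whose bounded solutions are affine combinations of $1$ and $erf(\cdot)$. For the solid temperature, the flux condition (1.v) fixes the coefficient of $erf$ as $\frac{q_0}{k_s}\sqrt{\pi\al_s}$ and (1.vi) fixes the additive constant, giving \eqref{FormTs}; for the liquid temperature, (1.viii) and (1.vi) give \eqref{FormTl}; the no-flux condition (1.ix) forces the $erf$-coefficient of $C_s$ to vanish, so with (1.x) one obtains the constant profile \eqref{FormCs}; and (1.xi)--(1.xii) yield \eqref{FormCl}. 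At this stage every condition of $(P_1)$ except the two balances (1.xiii)--(1.xiv) has been used, and the profiles depend only on the two unknowns $\lambda$ and $T_k$.

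Next I would substitute these profiles into the energy balance (1.xiii) and the mass balance (1.xiv). After evaluating the normal derivatives at $x=s(t)$ and cancelling the common factor $t^{-1/2}$, condition (1.xiii) rearranges, via the relations defining $Q$ and $F_1$, to the single relation $T_k=F(\lambda)$, while (1.xiv), after recognising $Q$ in its right-hand side, becomes $M(\lambda)=\phi(T_k)$. Thus a similarity solution exists if and only if the system \eqref{TkyM} admits a pair $(\lambda,T_k)$ with $\lambda>0$ and $T_A<T_k<T_B$, the profiles being then uniquely determined.

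The core of the proof is the analysis of \eqref{TkyM}. I would use the standard facts that $Q$ is strictly increasing with $Q(0^{+})=0$ and $Q(+\infty)=1$ (so $M$ is strictly decreasing from $+\infty$ onto $1$) and that $F_1$ is positive and strictly decreasing. Writing $F(\lambda)=T_0-P(\lambda)R(\lambda)$ with $P(\lambda)=\frac{\sqrt{\pi\al_l}}{k_l}F_1\big(\sqrt{\al_s/\al_l}\,\lambda\big)>0$ and $R(\lambda)=q_0e^{-\lambda^2}-\gamma\rho\sqrt{\al_s}\,\lambda$, one checks $F(0^{+})=T_0-\frac{q_0}{k_l}\sqrt{\pi\al_l}$ and $F(+\infty)=T_0+\frac{\gamma\rho\al_l}{k_l}$, and the bound \eqref{cotasq0} is exactly equivalent to $T_{0s}<F(0^{+})<T_{0l}$. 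Since $R$ is strictly decreasing with $R(0)=q_0>0$, it has a unique zero $\lambda_0$; on $(0,\lambda_0)$ both $P$ and $R$ are positive and decreasing, so $F$ is strictly increasing with $F(\lambda_0)=T_0$. Because the initial liquid state forces $T_{0l}<T_0$ and $F(0^{+})<T_{0l}$, there is a unique $\lambda^{*}\in(0,\lambda_0)$ with $F(\lambda^{*})=T_{0l}$, and on $(0,\lambda^{*})$ one has $F(\lambda)\in(T_{0s},T_{0l})$. Substituting $T_k=F(\lambda)$ into the second equation reduces everything to finding the zeros of $G(\lambda)=M(\lambda)-\phi(F(\lambda))$ on $(0,\lambda^{*})$: as $F(\lambda)\to T_{0l}^{-}$ the denominator of $\phi$ vanishes so $\phi(F(\lambda))\to+\infty$ while $M$ stays finite, giving $G(\lambda^{*-})=-\infty$; and $G(0^{+})=+\infty$ since $M(0^{+})=+\infty$ while $\phi(F(0^{+}))$ is finite. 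A monotonicity argument then yields a unique root, with $T_k=F(\lambda)\in(T_{0s},T_{0l})\subset(T_A,T_B)$.

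The main obstacle is the strict monotonicity required for uniqueness. Having shown $M$ decreasing and $F$ increasing on $(0,\lambda^{*})$, it remains to prove that $\phi$ is strictly increasing on $(T_{0s},T_{0l})$, so that $\phi\circ F$ is increasing and $G$ is strictly decreasing. Differentiating $\phi=\frac{f_s-f_l}{C_0-f_l}$ gives a numerator $(f_s'-f_l')(C_0-f_l)+(f_s-f_l)f_l'$; on $(T_{0s},T_{0l})$ the factors $C_0-f_l$, $f_s-f_l$ and $f_l'$ are positive, so the second summand is positive and the sign hinges on $f_s'-f_l'$. I expect this is precisely where the structural hypotheses on the liquidus and solidus curves must be invoked (for instance that the solidus is no flatter than the liquidus), and I would isolate it as the key lemma; once $\phi'>0$ is secured the strict decrease of $G$, and hence the uniqueness, is immediate.
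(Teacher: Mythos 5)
Your construction of the similarity profiles and the reduction of conditions (1.xiii)--(1.xiv) to the system $T_k=F(\lambda)$, $M(\lambda)=\phi(T_k)$ follows the paper's route exactly, and your localization of the equation $M(\lambda)=\phi(F(\lambda))$ to the interval $(0,\lambda^{*})$ on which $F<T_{0l}$ is, if anything, spelled out more carefully than in the paper (whose stated condition ``$T_{0s}\le F(0)<F(+\infty)<T_{0l}$'' cannot be meant literally, since $F(+\infty)>T_0>T_{0l}$). The one genuine gap is your final step: you leave the strict monotonicity of $\phi$ on $(T_{0s},T_{0l})$ conditional on an unproved structural hypothesis on $f_s'-f_l'$ (``the solidus is no flatter than the liquidus''). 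No such hypothesis is assumed in the paper --- only that $f_s$ and $f_l$ are increasing with $f_l<f_s$ on $(T_A,T_B)$ --- and none is needed; the difficulty is an artifact of differentiating $\phi$ in the form $\frac{f_s-f_l}{C_0-f_l}$. The paper's Proposition \ref{propsfcs}(\ref{propsfcs-e}) instead rewrites
\begin{equation*}
\phi(x)=\frac{f_s(x)-C_0}{C_0-f_l(x)}+1 .
\end{equation*}
On $[T_{0s},T_{0l})$ the numerator $f_s(x)-C_0$ is nonnegative and increasing (because $f_s(T_{0s})=C_0$ and $f_s$ is increasing), while the denominator $C_0-f_l(x)$ is positive and decreasing (because $f_l(T_{0l})=C_0$ and $f_l$ is increasing); hence the quotient, and therefore $\phi$, is strictly increasing there, with $\phi(T_{0s})=1$ and $\phi(T_{0l}^{-})=+\infty$. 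With this in hand your $G(\lambda)=M(\lambda)-\phi(F(\lambda))$ is strictly decreasing from $+\infty$ to $-\infty$ on $(0,\lambda^{*})$, and the argument closes without any added assumption on the slopes of the liquidus and solidus curves. (For uniqueness over all $\lambda>0$ you should also note that no root can occur where $F(\lambda)>T_{0l}$, since there $C_0-f_l(T_k)<0$ gives $\phi(T_k)<0<1<M(\lambda)$; the paper is silent on this point as well.)
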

The proof of the theorem is based on the next proposition which will be done below.
\begin{proposition}\label{propsfcs}
The following properties are valid,
\begin{enumerate}[(a)]
\item $erf(x)$ is an strictly increasing function, with $erf(0^+)=0$ and $erf(+\infty)=1$.

\item $Q$ is an strictly increasing function, with $Q(0^+)=0$ and $Q(+\infty)=1$.

\item $F_1$ is an strictly decreasing function, with $F_1(0^+)=1$ and $F_1(+\infty)=0$.

\item $F$ is an strictly increasing function, with $F(0^+)=T_0-\frac{\sqrt{\pi \al_l}q_0}{k_l}$ and $ F(+\infty)=T_0+\frac{\gamma \rho \al_l}{k_l}$.

\item\label{propsfcs-e} $\phi$ is an strictly increasing function on $[T_{0s},T_{0l})$, with $\phi(T_{0s})=1$ and $\phi(T_{0l}^-)=\lim\limits_{x\rightarrow T_{0l}^-}\phi(x)=+\infty$.
\end{enumerate}
\end{proposition}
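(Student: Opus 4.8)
The plan is to establish items (a)--(e) in order, using the earlier ones to feed the later ones and relying throughout on integral representations that make monotonicity transparent. Item (a) is immediate: by the Fundamental Theorem of Calculus $erf'(x)=\frac{2}{\sqrt{\pi}}e^{-x^2}>0$, so $erf$ is strictly increasing, and $erf(0^+)=0$, $erf(+\infty)=1$ are the endpoint values of the normalized Gaussian integral.

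For (b) and (c) the key device is the substitution $t=x+s$ in the tail integral, giving the representation $e^{x^2}\int_x^\infty e^{-t^2}\dd t=\int_0^\infty e^{-2xs-s^2}\dd s$. From this, $F_1(x)=\frac{2}{\sqrt{\pi}}\int_0^\infty e^{-2xs-s^2}\dd s$, whose integrand is strictly decreasing in $x$ for each $s>0$; hence $F_1$ is strictly decreasing, and $F_1(0^+)=\frac{2}{\sqrt{\pi}}\int_0^\infty e^{-s^2}\dd s=1$, $F_1(+\infty)=0$ follow by monotone/dominated convergence. A further substitution $u=2xs$ turns $Q(x)=2x\int_0^\infty e^{-2xs-s^2}\dd s$ into $Q(x)=\int_0^\infty e^{-u}e^{-u^2/(4x^2)}\dd u$, whose integrand is strictly increasing in $x$; this gives monotonicity of $Q$ at once, with $Q(0^+)=0$ and $Q(+\infty)=\int_0^\infty e^{-u}\dd u=1$.

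For (d) write $a=\sqrt{\al_s}/\sqrt{\al_l}>0$, so that $F(x)=T_0+\frac{\gamma\rho\al_l}{k_l}Q(ax)-\frac{q_0\sqrt{\pi\al_l}}{k_l}g(x)$ with $g(x)=e^{-x^2}F_1(ax)$. The nonconstant first term is increasing by (b), and since $q_0>0$ it suffices to prove $g$ is strictly decreasing. Differentiating gives $g'(x)=e^{-x^2}\big[2(a^2-1)x\,F_1(ax)-\frac{2a}{\sqrt{\pi}}\big]$, so I must show $(a^2-1)x\,F_1(ax)<\frac{a}{\sqrt{\pi}}$. When $\al_s\le\al_l$ the left side is $\le 0$ and the bound is trivial; the genuine obstacle is the case $\al_s>\al_l$, which I resolve with the classical estimate $erfc(y)\le\frac{e^{-y^2}}{\sqrt{\pi}\,y}$ (equivalently $F_1(y)\le\frac{1}{\sqrt{\pi}\,y}$, obtained from $\int_y^\infty e^{-t^2}\dd t\le\int_y^\infty\frac{t}{y}e^{-t^2}\dd t$). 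With $y=ax$ this yields $(a^2-1)x\,F_1(ax)\le\frac{a^2-1}{\sqrt{\pi}\,a}<\frac{a}{\sqrt{\pi}}$, so $g'<0$ for every admissible $a$. The endpoints follow from (b), (c) and the same bound: as $x\to0^+$, $Q(ax)\to0$ and $g(x)\to F_1(0^+)=1$, so $F(0^+)=T_0-\frac{q_0\sqrt{\pi\al_l}}{k_l}$; as $x\to+\infty$, $Q(ax)\to1$ and $g(x)\le e^{-x^2}/(\sqrt{\pi}ax)\to0$, so $F(+\infty)=T_0+\frac{\gamma\rho\al_l}{k_l}$.

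Finally, for (e) I would use the identity $\phi(x)=1+\frac{f_s(x)-C_0}{C_0-f_l(x)}$ together with the defining relations $f_s(T_{0s})=f_l(T_{0l})=C_0$ and the model inequalities $f_l<f_s$ with $f_l,f_s$ strictly increasing. On $[T_{0s},T_{0l})$ the numerator $f_s(x)-C_0$ is nonnegative, vanishes only at $T_{0s}$, and is strictly increasing, while the denominator $C_0-f_l(x)$ is strictly positive and strictly decreasing; a quotient of a nonnegative increasing function over a positive decreasing one is strictly increasing, giving monotonicity of $\phi$. Direct evaluation yields $\phi(T_{0s})=1$ (numerator equals denominator there), and $\phi(T_{0l}^-)=+\infty$ since the denominator tends to $0^+$ while the numerator tends to $f_s(T_{0l})-C_0>0$. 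I expect the single hard point of the whole proposition to be the case $\al_s>\al_l$ in (d); every other step reduces either to a monotone integral representation or to an elementary sign analysis.
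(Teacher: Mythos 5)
Your proposal is correct, but it is more self-contained than the paper's proof and, in one place, more complicated than necessary. For items (a)--(c) the paper simply invokes the definition of $erf$ and cites earlier references, whereas you derive the monotonicity and limits of $Q$ and $F_1$ from the integral representations $F_1(x)=\tfrac{2}{\sqrt{\pi}}\int_0^\infty e^{-2xs-s^2}\,\dd s$ and $Q(x)=\int_0^\infty e^{-u}e^{-u^2/(4x^2)}\,\dd u$; these identities are correct and give a clean, reference-free argument, which is a genuine gain in self-containedness. For item (d) the paper's entire argument is that $Q\big(\tfrac{\sqrt{\al_s}}{\sqrt{\al_l}}x\big)$ is increasing while $e^{-x^2}F_1\big(\tfrac{\sqrt{\al_s}}{\sqrt{\al_l}}x\big)$ is a product of two positive strictly decreasing functions, hence strictly decreasing, so that with $q_0>0$ the sum $F$ is strictly increasing; your differentiation of $g(x)=e^{-x^2}F_1(ax)$ together with the case split on $\al_s\lessgtr\al_l$ and the bound $F_1(y)\le\tfrac{1}{\sqrt{\pi}\,y}$ is valid but entirely avoidable --- the ``single hard point'' you identify dissolves once you observe that both factors of $g$ are positive and decreasing by your own item (c). Your treatment of (e) coincides with the paper's: same rewriting $\phi(x)=1+\tfrac{f_s(x)-C_0}{C_0-f_l(x)}$, same sign analysis of numerator and denominator, same evaluation of $\phi(T_{0s})=1$ and of the limit at $T_{0l}^-$ using $f_s(T_{0l})>f_l(T_{0l})=C_0$.
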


\begin{proof}
$(a)$, $(b)$, and $(c)$ follows from the definition of $erf$ function, \cite{Tar:1981} and \cite{Wi:1978}.
\begin{enumerate}
\item[(d)] Since $ Q $ is an increasing function and $ F_1 $ is a decreasing function, $ F $ is an increasing function. Further,
\begin{equation*}
F(0)=T_0+\frac{\gamma \rho \al_l}{k_l}  Q(0)-\frac{q_0}{k_l} \sqrt{\al_l \pi}e^{-0^2}F_1(0)=T_0-\frac{q_0}{k_l} \sqrt{\al_l \pi},
\end{equation*}
and
\begin{equation*}
F(+\infty)=\lim\limits_{x\rightarrow+\infty} T_0+\frac{\gamma \rho \al_l}{k_l}  Q\bigg(\frac{\sqrt{\al_s}}{\sqrt{\al_l}}x\bigg)-\frac{q_0}{k_l} \sqrt{\al_l \pi}e^{-x^2}F_1\bigg(\frac{\sqrt{\al_s}}{\sqrt{\al_l}}x\bigg)= T_0+\frac{\gamma \rho \al_l}{k_l},
\end{equation*}
\item[(e)] We have,
\begin{equation*}
\phi(x)=\frac{f_s(x)-C_0}{C_0-f_l(x)}+1,
\end{equation*}
and $ f_s (x) \geq C_0 $ for all $ x \geq T_{0s} $, $ f_l (x) <C_0 $ for all $ x <T_{0l} $. Consequently, since $ f_s $ and $ f_l $ are increasing functions, $ \phi $ is increasing on $ [T_{0s}, T_{0l}) $.
Moreover,
\begin{equation*}
\phi(T_{0s})=\frac{f_s(T_{0s})-f_l(T_{0s})}{C_0-f_l(T_{0s})}=1,
\end{equation*}
and since $ f_s (x)> f_l (x) $ for all $ x \in (T_A, T_B) $, in particular $ f_s (T_{0l})> f_l (T_{0l}) $, from where
\begin{equation*}
\phi(T_{0l}^-)=\lim\limits_{x\rightarrow T_{0l}^-}\frac{f_s(x)-f_l(x)}{C_0-f_l(x)}=+\infty.
\end{equation*}
\end{enumerate}
\vspace{-1cm}
\end{proof}
We now prove Theorem \ref{teoprincipal}.
\begin{proof}
The similarity solutions to the heat equation $ \al u_{xx} = u_t $ has the form $ u (x, t) = A + B erf \bigg (\frac{x}{2 \sqrt {\al t}} \bigg) $ with $A$ and $B$ real numbers to be determined.
Then, we can suppose that
\begin{equation}\label{FormSolExplic}
\begin{split}
T_s(x,t)=A_s^T+B_s^T erf\bigg(\frac{x}{2 \sqrt{\al_s t}}\bigg), \quad
T_l(x,t)=A_l^T+B_l^T erf\bigg(\frac{x}{2 \sqrt{\al_l t}}\bigg),\\
C_s(x,t)=A_s^C+B_s^C erf\bigg(\frac{x}{2 \sqrt{d_s t}}\bigg), \quad
C_l(x,t)=A_l^C+B_l^C erf\bigg(\frac{x}{2 \sqrt{d_l t}}\bigg).
\end{split}
\end{equation}
By \eqref{PS-AB}-$vi$ it results
\begin{equation*}
T_s(s(t),t)= A_s^T+B_s^T erf\bigg(\frac{s(t)}{2 \sqrt{\al_s t}}\bigg)=T_k,
\end{equation*}
where $T_k$ is a constant to be determined. Then it follows that $s(t)=2\lambda \sqrt{\al_s t}$ for some $\lambda >0$, and again by \eqref{PS-AB}-$vi$, we have
\begin{equation}\label{TsF}
T_s(s(t),t)= A_s^T+B_s^T erf(\lambda)=T_k,
\end{equation}
\begin{equation}\label{TlF}
T_l(s(t),t)= A_l^T+B_l^T erf\bigg(\frac{\sqrt{\al_s}}{\sqrt{\al_l}}\lambda\bigg)=T_k.
\end{equation}
On the other hand, from \eqref{PS-AB}-$v$, we obtain that
\begin{equation*}
k_s T_{s_x}(0,t)=\frac{k_s B_s^T}{\sqrt{\pi\al_s t}}=\frac{q_0}{\sqrt{t}},
\end{equation*}
from where
\begin{equation}\label{BsT}
B_s^T=\frac{q_0}{k_s}\sqrt{\pi\al_s }.
\end{equation}
Then, replacing \eqref{BsT} in \eqref{TsF}, we have
\begin{equation*}
A_s^T=T_k - \frac{q_0}{k_s}\sqrt{\pi\al_s } erf(\lambda).
\end{equation*}
From \eqref{PS-AB}-$vii$, it follows that
\begin{equation}\label{Tlt0}
T_l(x,0)=A_l^T+B_l^T=T_0.
\end{equation}
Subtracting \eqref{TlF} and \eqref{Tlt0}, we obtain that
\begin{equation*}
B_l^T erfc\bigg(\frac{\sqrt{\al_s}}{\sqrt{\al_l}}\lambda\bigg)=T_0-T_k,
\end{equation*}
or equivalently,
\begin{equation}\label{BlT}
B_l^T =\frac{T_0-T_k}{erfc\bigg(\frac{\sqrt{\al_s}}{\sqrt{\al_l}}\lambda\bigg)}.
\end{equation}
Replacing \eqref{BlT} in \eqref{Tlt0}, we conclude that
\begin{equation*}
A_l^T =T_0-\frac{T_0-T_k}{erfc\bigg(\frac{\sqrt{\al_s}}{\sqrt{\al_l}}\lambda\bigg)}.
\end{equation*}
Therefore, we obtain \eqref{FormTs} and \eqref{FormTl}.
Similarly, considering the equalities $ix$-$xii$ from \eqref{PS-AB}, we deduce that
\begin{equation*}
C_{s_x}(0,t)=\frac{B_s^C}{\sqrt{\pi\al_s t}}=0,
\end{equation*}
from where $B_s^C=0$, and
\begin{equation*}
C_s(s(t),t)= A_s^C+B_s^C erf\bigg(\frac{\sqrt{\al_s}}{\sqrt{d_s}}\lambda\bigg)=A_s^C = f_s(T_k).
\end{equation*}
Hence, $C_s\equiv  f_s(T_k)$.
On the other hand,
\begin{equation*}
C_l(x,0)=A_l^C+B_l^C =C_0,
\end{equation*}
and
\begin{equation*}
C_l(s(t),t)=A_l^C+B_l^C erf\bigg(\frac{\sqrt{\al_s}}{\sqrt{d_l}}\lambda\bigg)= f_l(T_k),
\end{equation*}
from where,
\begin{equation*}
A_l^C =C_0-\frac{C_0-f_l(T_k)}{erfc\bigg(\frac{\sqrt{\al_s}}{\sqrt{d_l}}\lambda\bigg)}, \quad
B_l^C=\frac{C_0-f_l(T_k)}{erfc\bigg(\frac{\sqrt{\al_s}}{\sqrt{d_l}}\lambda\bigg)}.
\end{equation*}
Therefore
\begin{equation*}
C_l(x,t)= C_0-\frac{C_0-f_l(T_k)}{erfc\bigg(\frac{\sqrt{\al_s}}{\sqrt{d_l}}\lambda\bigg)}+\frac{C_0-f_l(T_k)}{erfc\bigg(\frac{\sqrt{\al_s}}{\sqrt{d_l}}\lambda\bigg)} erf\bigg(\frac{x}{2 \sqrt{d_l t}}\bigg)=C_0+\frac{f_l(T_k)-C_0}{erfc\bigg(\frac{\sqrt{\al_s}}{\sqrt{d_l}}\lambda\bigg)}erfc\bigg(\frac{x}{2 \sqrt{d_l t}}\bigg),
\end{equation*}
that is \eqref{FormCl}.

From \eqref{PS-AB}-$xiii$, we have
\begin{equation*}
q_0 e^{-\lambda^2}+k_l \frac{T_k-T_0}{erfc\bigg(\frac{\sqrt{\al_s}}{\sqrt{\al_l}}\lambda\bigg)}\frac{e^{-\bigg(\frac{\sqrt{\al_s}}{\sqrt{\al_l}}\lambda\bigg)^2}}{\sqrt{\al_l \pi}}=\gamma\rho \lambda\sqrt{\al_s}.
\end{equation*}
Thus,
\begin{equation}\label{eq1}
\begin{split}
T_k&=T_0+(\gamma\rho \lambda\sqrt{\al_s}-q_0 e^{-\lambda^2})\frac{\sqrt{\al_l \pi}erfc\bigg(\frac{\sqrt{\al_s}}{\sqrt{\al_l}}\lambda\bigg)}{k_l e^{-\bigg(\frac{\sqrt{\al_s}}{\sqrt{\al_l}}\lambda\bigg)^2}}\\
&=T_0+\frac{\gamma\rho \al_l}{k_l }Q\bigg(\frac{\sqrt{\al_s}}{\sqrt{\al_l}}\lambda\bigg)-q_0 e^{-\lambda^2}\frac{\sqrt{\al_l \pi}}{k_l}F_1\bigg(\frac{\sqrt{\al_s}}{\sqrt{\al_l}}\lambda\bigg)\\
&=F(\lambda),
\end{split}
\end{equation}
where $Q$, $F_1$ y $F$ are given by \eqref{defQF1F}.

From \eqref{PS-AB}-$xiv$, we obtain
\begin{equation}\label{eq2}
\frac{1}{\sqrt{\pi}\frac{\sqrt{\al_s}}{\sqrt{d_l}}\lambda e^{\bigg(\frac{\sqrt{\al_s}}{\sqrt{d_l}}\lambda\bigg)^2}erfc\bigg(\frac{\sqrt{\al_s}}{\sqrt{d_l}}\lambda\bigg)}=\frac{f_s(T_k)-f_l(T_k)}{C_0-f_l(T_k)},
\end{equation}
that is
\begin{equation}
M(\lambda)=\phi(T_k).
\end{equation}
By Proposition \ref{propsfcs}, $Q$ and $F$ are increasing funtions. Then, $M$ is a decreasing funtion, and
\begin{equation}\label{valoresM}
M(0^+)=\lim\limits_{x\rightarrow 0^+}\frac{1}{Q(x)}=+\infty, \quad M(+\infty)=\frac{1}{Q(+\infty)}=1,
\end{equation}
\begin{equation}\label{valoresF}
F(0^+)=T_0-\frac{\sqrt{\pi \al_l}q_0}{k_l},\quad F(+\infty)=T_0+\frac{\gamma \rho \al_l}{k_l}.
\end{equation}
By \eqref{eq1} y \eqref{eq2}, we have
\begin{equation}\label{eqlamb}
M(\lambda)=\phi(F(\lambda)).
\end{equation}
Then, taking into account \eqref{valoresM}, \eqref{valoresF} and of Proposition \ref{propsfcs}-5, we can assure the existence and uniqueness of $\lambda$ verifying \eqref{eqlamb}, if $ T_{0s} \leq F (0) <F (+ \infty) <T_{0l} $, or equivalently \eqref{cotasq0}.
\end{proof}
\begin{remark}\label{obs}
Define $T_1 = T_s(0,t)$. Then,
\begin{equation*}
T_1=T_k-\frac{q_0}{k_s}\sqrt{\al_s \pi} erf(\lambda), 
\end{equation*}
which is a constant independent of time $t$. Moreover,
\begin{equation*}
T_1\leq T_s(x,t)<T_k, \quad 0<x<s(t), \quad t>0. 
\end{equation*}
In fact, since $ erf (x) $ is an increasing function, and for $x\in (0,s(t))$, $0< erf\bigg(\frac{x}{2\sqrt{\al_s t}}\bigg)<erf(\lambda)$, we have
\begin{equation*}
T_1=T_s(0,t)< T_s(x,t)< T_s(s(t),t)=T_k, \quad 0<x<s(t), \quad t>0. 
\end{equation*}
\end{remark}
\begin{corollary}
Define $C_{1l}=f_l(T_1)$ and $C_{2l}=f_l(T_0)$. Then, $C_0\in [C_{1l},C_{2l}]$.
\end{corollary}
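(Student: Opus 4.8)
The plan is to prove the two inequalities $C_{1l}\le C_0$ and $C_0\le C_{2l}$ separately, transferring each of them, through the monotonicity of the increasing liquidus curve $f_l$, into a comparison among the temperatures $T_1$, $T_0$ and the liquidus temperature $T_{0l}=f_l^{-1}(C_0)$ attached to the initial concentration.

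The upper bound $C_0\le C_{2l}$ is the easy half. By hypothesis the alloy is initially in the liquid state at $(T_0,C_0)$, so the liquid-phase criterion $C<f_l(T)$ yields $C_0<f_l(T_0)=C_{2l}$; equivalently, $C_0=f_l(T_{0l})<f_l(T_0)$ forces the ordering $T_{0l}<T_0$. No further work is needed here.

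For the lower bound $C_{1l}\le C_0$ it suffices, again by the monotonicity of $f_l$, to establish $T_1\le T_{0l}$, since then $C_{1l}=f_l(T_1)\le f_l(T_{0l})=C_0$. I would obtain this through the chain $T_1<T_k<T_{0l}$. The first inequality $T_1<T_k$ is exactly the content of Remark~\ref{obs}, where the fixed-face temperature $T_s(0,t)=T_1$ is shown to be the minimum of $T_s$ over the solid phase and to stay strictly below the interface value $T_k$. The inequality $T_k<T_{0l}$ is the crux, and I would read it off the second relation in \eqref{TkyM}, namely $M(\lambda)=\phi(T_k)$. Since $\lambda>0$, Proposition~\ref{propsfcs}(b) together with \eqref{valoresM} gives $M(\lambda)\in(1,+\infty)$, hence $\phi(T_k)>1$. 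On the other hand, by Proposition~\ref{propsfcs}(e) the function $\phi$ is strictly increasing on $[T_{0s},T_{0l})$ with $\phi(T_{0s})=1$ and $\phi(T_{0l}^-)=+\infty$; a quick sign check shows $\phi<1$ on $(T_A,T_{0s})$ and $\phi<0$ on $(T_{0l},T_B)$, so the level $\phi=1$ is exceeded only on $(T_{0s},T_{0l})$. Consequently $\phi(T_k)>1$ forces $T_k\in(T_{0s},T_{0l})$, and in particular $T_k<T_{0l}$.

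Chaining the inequalities gives $T_1<T_k<T_{0l}<T_0$, whence $C_{1l}=f_l(T_1)<C_0<f_l(T_0)=C_{2l}$, so that $C_0\in(C_{1l},C_{2l})\subset[C_{1l},C_{2l}]$, and the interval is moreover non-degenerate. The only delicate point is the strict bound $T_k<T_{0l}$: one must make sure the solution pair $(\lambda,T_k)$ lands in the branch $[T_{0s},T_{0l})$ of the domain of $\phi$ and not in $(T_{0l},T_B)$, which is exactly why I isolate the value $\phi(T_k)$ and use that $M(\lambda)$ stays strictly above $1$ for every finite $\lambda>0$. Everything else follows directly from the monotonicity of $f_l$ and from the facts already recorded in Proposition~\ref{propsfcs} and Remark~\ref{obs}.
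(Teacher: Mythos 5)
Your proof is correct, but for the lower bound $C_{1l}\le C_0$ it takes a genuinely different route from the paper. Both proofs handle the upper bound identically (the liquid-phase criterion $C_0<f_l(T_0)$), and both reduce the lower bound to $T_1\le T_{0l}$ via the monotonicity of $f_l$. The paper gets there by asserting the existence of an interior point $(x_0,t_0)$ of the solid region where $T_s(x_0,t_0)=T_{0s}$, so that Remark~\ref{obs} sandwiches $T_1\le T_{0s}\le T_{0l}$; note that this existence claim is stated without justification and implicitly presupposes $T_1\le T_{0s}<T_k$, i.e.\ part of what is being used. You instead prove the chain $T_1<T_k<T_{0l}$ directly: $T_1<T_k$ from Remark~\ref{obs}, and $T_k<T_{0l}$ from $\phi(T_k)=M(\lambda)>1$ (since $Q<1$ on $(0,\infty)$) together with the sign analysis showing $\phi>1$ only on $(T_{0s},T_{0l})$. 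That sign analysis checks out, and your localization $T_k\in(T_{0s},T_{0l})$ is in any case consistent with what the existence argument of Theorem~\ref{teoprincipal} already provides, since $T_{0s}\le F(0)<T_k=F(\lambda)<F(+\infty)<T_{0l}$ under \eqref{cotasq0}. The net effect is that your argument avoids the unproved intermediate-value step and yields the slightly stronger strict conclusion $C_0\in(C_{1l},C_{2l})$; the paper's version is shorter on the page but rests on an unverified attainment claim.
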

\begin{proof}
By definition, $C_0=f_l(T_{0l})$. By hypothesis, $f_l(T_l)\geq C_l$. Hence
\begin{equation}\label{cotaC2l}
C_{2l}=f_l(T_0)=f_l(T_l (x,0))\geq C_l (x,0)=C_0.
\end{equation}
On the other hand, there are values $ x_0 $ and $ t_0 $, with $ 0 <x_0 <s (t_0) $, such that
\begin{equation*}
T_{0s}=f_s^{-1}(C_0)=T_s(x_0,t_0),
\end{equation*}
and since $ f_l (T) <f_s (T) $, it results $T_{0s}=f_s^{-1}(C_0)<f_l^{-1}(C_0)=T_{0l}$.

Thus, by Remark \ref{obs}, it results $T_1\leq T_s(x_0,t_0)=T_{0s}\leq T_{0l}$, from where, taking into account that $ f_l $ is an increasing function, we obtain that
\begin{equation}\label{cotaC1l}
C_{1l}=f_l(T_1)\leq f_l (T_{0l})=C_0.
\end{equation}
By \eqref{cotaC2l} and \eqref{cotaC1l} we conclude that $C_0\in [C_{1l},C_{2l}]$.
\end{proof}

\section{An Inequality for the Coefficient $\mu$ of the Rubinstein Free Boundary}\label{equivprob1}
For the solution given in Theorem \ref{teoprincipal}, the temperature on the fixed face $x=0$ is given by
\begin{equation}\label{temp0}
\widetilde{T}_1 = T_s(0,t)=T_k-\frac{q_0}{k_s}\sqrt{\al_s \pi} erf(\lambda).
\end{equation}
Since $\widetilde{T}_1<T_k$, we can consider the problem $(P_R)$ given by \eqref{PS-AB}$(i)$-$(iv)$,$(\widetilde{v})$,$(vi)$-$(xiv)$, where
\begin{equation}\label{convcond}
\begin{array}{lll}
\widetilde{v}. & \widetilde{T}_s(0,t)=T_1, & t>0.
\end{array}
\end{equation}
Problem $(P_R)$ has a unique solution, known as Rubinstein solution, given by \cite{Rubi}, that is
\begin{equation}\label{RubSol}
\widetilde{T}_s(x,t)=\mathscr{A}_s^T+\mathscr{B}_s^T erf\bigg(\frac{x}{2 \sqrt{\al_s t}}\bigg), \quad
\widetilde{T}_l(x,t)=\mathscr{A}_l^T+\mathscr{B}_l^T erf\bigg(\frac{x}{2 \sqrt{\al_l t}}\bigg),
\end{equation}
\begin{equation}
\widetilde{C}_s(x,t)=\mathscr{A}_s^C+\mathscr{B}_s^C erf\bigg(\frac{x}{2 \sqrt{d_s t}}\bigg), \quad
\widetilde{C}_l(x,t)=\mathscr{A}_l^C+\mathscr{B}_l^C erf\bigg(\frac{x}{2 \sqrt{d_l t}}\bigg),
\end{equation}
\begin{equation}
\widetilde{s}(t)=2\mu \sqrt{\al_s t}
\end{equation}
where
\begin{equation}
\mathscr{A}_s^T(\mu)=T_1,
\quad\quad
\mathscr{B}_s^T(\mu)=\frac{\widetilde{T}_k-T_1}{erf(\mu)},
\end{equation}
\begin{equation}
\mathscr{A}_l^T(\mu)=T_0+\frac{\widetilde{T}_k-T_0}{erfc\bigg(\frac{\sqrt{\al_s}}{\sqrt{\al_l}}\mu\bigg)},
\quad\quad
\mathscr{B}_l^T(\mu)=\frac{T_0-\widetilde{T}_k}{erfc\bigg(\frac{\sqrt{\al_s}}{\sqrt{\al_l}}\mu\bigg)},
\end{equation}
\begin{equation}
\mathscr{A}_s^C(\mu)=f_s(\widetilde{T}_k),
\quad\quad
\mathscr{B}_s^C(\mu)=0,
\end{equation}
\begin{equation}
\mathscr{A}_l^C(\mu)=C_0+\frac{f_l(\widetilde{T}_k)-C_0}{erfc\bigg(\frac{\sqrt{\al_s}}{\sqrt{d_l}}\mu\bigg)},
\quad\quad
\mathscr{B}_l^C(\mu)=\frac{C_0-f_l(\widetilde{T}_k)}{erfc\bigg(\frac{\sqrt{\al_s}}{\sqrt{d_l}}\mu\bigg)},
\end{equation}
and the unknowns $ \widetilde{T}_k $ and $ \mu $ (coefficient that characterizes the free boundary $ x = s (t) $) satisfy the following equations:
\begin{equation}\label{condTkN}
\widetilde{T}_k=G(\mu),\quad M(\mu)=\phi(\widetilde{T}_k),\quad \mu>0,\quad T_A<\widetilde{T}_k<T_B,
\end{equation}
where $G$ is defined as follows:
\begin{equation*}
G(x)=\frac{\gamma \rho\al_s\al_l Q_1(x)Q\bigg(\frac{\sqrt{\al_s}}{\sqrt{\al_l}}x\bigg)+T_1 k_s \al_l Q\bigg(\frac{\sqrt{\al_s}}{\sqrt{\al_l}}x\bigg) + T_0 k_l \al_s Q_1(x)}{k_s \al_l Q\bigg(\frac{\sqrt{\al_s}}{\sqrt{\al_l}}x\bigg) + k_l \al_s Q_1(x)},\quad x>0,
\end{equation*}
\begin{equation}\label{defQ1}
Q_1(x)=\sqrt{\pi} x e^{x^2}erf(x).
\end{equation}
\begin{theorem}\label{equivtheo}
If in the Rubinstein solution \eqref{RubSol}-\eqref{condTkN} we take $T_1=\widetilde{T}_1$, under condition \eqref{cotasq0} and \eqref{temp0}, we obtain that the two free boundaries problems $(P_1)$ and $(P_R)$ are equivalents, that is
\begin{equation}\label{equiv1}
\mu=\lambda, \quad \widetilde{T}_k=T_k,
\end{equation}
\begin{equation}\label{TyCtildes}
T_s(x,t)=\widetilde{T}_s(x,t), \quad T_l(x,t)=\widetilde{T}_l(x,t), \quad C_s(x,t)=\widetilde{C}_s(x,t), \quad C_l(x,t)=\widetilde{C}_l(x,t).
\end{equation}
\end{theorem}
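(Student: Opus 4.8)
The plan is to avoid grinding through the two transcendental systems and instead argue by the uniqueness of the Rubinstein solution. The key observation is that problems $(P_1)$ and $(P_R)$ are literally the same in every defining condition \eqref{PS-AB}$(i)$–$(iv)$, $(vi)$–$(xiv)$; they differ only in the prescription at the fixed face, the heat flux $(v)$ for $(P_1)$ against the Dirichlet datum $(\widetilde v)$ for $(P_R)$. Hence it suffices to show that the solution of $(P_1)$ delivered by Theorem \ref{teoprincipal} automatically satisfies $(\widetilde v)$ once we set $T_1=\widetilde T_1$.

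First I would invoke Remark \ref{obs}: for the $(P_1)$-solution the trace $T_s(0,t)$ equals the time-independent constant $\widetilde T_1$ defined in \eqref{temp0}, and $T_A<\widetilde T_1<T_k<T_B$. Therefore, choosing $T_1=\widetilde T_1$, the quadruple $(T_s,T_l,C_s,C_l)$ together with $s(t)=2\lambda\sqrt{\al_s t}$ and $T_k$ satisfies condition $(\widetilde v)$ as well, and consequently satisfies every condition defining $(P_R)$. In other words, the $(P_1)$-solution is itself a similarity solution of $(P_R)$ with boundary datum $\widetilde T_1$.

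Next I would appeal to the uniqueness of the Rubinstein solution recalled after \eqref{convcond}: since $(P_R)$ admits only one solution and the $(P_1)$-solution is one of them, the two must coincide. This forces $\mu=\lambda$ and $\widetilde T_k=T_k$, that is \eqref{equiv1}, after which the explicit formulas \eqref{FBP1}–\eqref{FormCl} and \eqref{RubSol}–\eqref{condTkN} give \eqref{TyCtildes} field by field. A one-line consistency check supports this: evaluating the Rubinstein coefficients at $\mu=\lambda$, $\widetilde T_k=T_k$, $T_1=\widetilde T_1$ gives $\mathscr A_s^T=T_k-\frac{q_0}{k_s}\sqrt{\pi\al_s}\,erf(\lambda)=A_s^T$ and $\mathscr B_s^T=\frac{q_0}{k_s}\sqrt{\pi\al_s}=B_s^T$, so the solid temperature profiles agree identically, while the liquid and concentration profiles are common to both problems.

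If instead one wants a self-contained argument at the level of the defining equations, the task collapses to a single identity, and that is where I expect the real work to lie. Both systems \eqref{TkyM} and \eqref{condTkN} share the second equation $M(\cdot)=\phi(\cdot)$, so it is enough to prove $G(\lambda)=F(\lambda)$ after substituting $T_1=\widetilde T_1=T_k-\frac{q_0}{k_s}\sqrt{\pi\al_s}\,erf(\lambda)$. The hard part will be inserting $\widetilde T_1$ into the weighted-average form of $G$ and using the Stefan condition \eqref{PS-AB}$(xiii)$, equivalently the derivation \eqref{eq1}, to collapse the $Q_1$- and $Q$-terms back into the flux expression that appears in $F$. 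Once $G(\lambda)=F(\lambda)=T_k$ is established, $(\lambda,T_k)$ solves the $(P_R)$-system, and the monotonicity of $G$, $M$ and $\phi$ (exactly as in the proof of Theorem \ref{teoprincipal}) shows it is the only solution, again yielding \eqref{equiv1} and \eqref{TyCtildes}.
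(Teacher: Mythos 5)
Your argument is correct, and your primary route is genuinely different from the paper's. The paper proves the theorem by brute force at the level of the transcendental system: it substitutes $T_1=\widetilde{T}_1$ into the weighted-average expression for $G$ and carries out a chain of algebraic manipulations (using $F(\lambda)=T_k$, i.e.\ the Stefan condition \eqref{eq1}) to establish the identity $G(\lambda)=T_k$; combined with $M(\lambda)=\phi(T_k)$ this shows $(\lambda,T_k)$ solves \eqref{condTkN}, and uniqueness of the pair $(\mu,\widetilde{T}_k)$ finishes the proof. You instead observe, via Remark \ref{obs}, that the $(P_1)$-solution has constant trace $T_s(0,t)=\widetilde{T}_1$, hence already satisfies condition $(\widetilde{v})$ with $T_1=\widetilde{T}_1$ and is therefore a similarity solution of $(P_R)$; uniqueness of the Rubinstein solution then forces coincidence of all fields, of $\mu$ with $\lambda$, and of $\widetilde{T}_k$ with $T_k$. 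Your approach is softer and shorter: it delegates the identity $G(\lambda)=T_k$ to the fact that \eqref{condTkN} is nothing but the transcription of conditions $(\widetilde{v})$, $(vi)$--$(xiv)$ for similarity profiles, so the $(P_1)$-parameters satisfy it automatically. What the paper's computation buys in exchange is an explicit, self-contained verification that does not lean on the black-box uniqueness of the boundary-value problem $(P_R)$ beyond the uniqueness of the pair in \eqref{condTkN} — though in the end both proofs rest on that same uniqueness assertion from \cite{Rubi}. Your fallback sketch (reduce to $G(\lambda)=F(\lambda)=T_k$ and collapse the $Q_1$- and $Q$-terms using the Stefan condition) is precisely the computation the paper performs in \eqref{cuentasG}, so you have correctly identified where the real work sits if one refuses the soft argument.
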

\begin{proof}
By \eqref{TkyM} we have that $M(\lambda)=\phi(T_k)$. Taking into account that $T_1=\widetilde{T}_1$ is given by \eqref{temp0}, we have
\begin{equation}\label{cuentasG}
\begin{split}
G(\lambda)&=\frac{\gamma \rho\al_s\al_l Q_1(\lambda)Q\bigg(\frac{\sqrt{\al_s}}{\sqrt{\al_l}}\lambda\bigg)+\widetilde{T}_1 k_s \al_l Q\bigg(\frac{\sqrt{\al_s}}{\sqrt{\al_l}}\lambda\bigg) + T_0 k_l \al_s Q_1(\lambda)}{k_s \al_l Q\bigg(\frac{\sqrt{\al_s}}{\sqrt{\al_l}}\lambda\bigg) + k_l \al_s Q_1(\lambda)}\\
&=\frac{\gamma \rho\al_s\al_l Q_1(\lambda)Q\bigg(\frac{\sqrt{\al_s}}{\sqrt{\al_l}}\lambda\bigg)+T_k k_s \al_l Q\bigg(\frac{\sqrt{\al_s}}{\sqrt{\al_l}}\lambda\bigg)-q_0 \sqrt{\al_s \pi} erf(\lambda)\al_l Q\bigg(\frac{\sqrt{\al_s}}{\sqrt{\al_l}}\lambda\bigg) + T_0 k_l \al_s Q_1(\lambda)}{k_s \al_l Q\bigg(\frac{\sqrt{\al_s}}{\sqrt{\al_l}}\lambda\bigg) + k_l \al_s Q_1(\lambda)}\\
&=\frac{\al_s k_l Q_1(\lambda)\bigg(T_0+\frac{\gamma \rho\al_l}{k_l} Q\bigg(\frac{\sqrt{\al_s}}{\sqrt{\al_l}}\lambda\bigg) - \frac{q_0 \al_l}{\sqrt{\al_s}k_l} \frac{e^{-\lambda^2}}{\lambda}Q\bigg(\frac{\sqrt{\al_s}}{\sqrt{\al_l}}\lambda\bigg) \bigg)
+T_k k_s \al_l Q\bigg(\frac{\sqrt{\al_s}}{\sqrt{\al_l}}\lambda\bigg)}{k_s \al_l Q\bigg(\frac{\sqrt{\al_s}}{\sqrt{\al_l}}\lambda\bigg) + k_l \al_s Q_1(\lambda)}\\
&=\frac{\al_s k_l Q_1(\lambda)\bigg(T_0+\frac{\gamma \rho\al_l}{k_l} Q\bigg(\frac{\sqrt{\al_s}}{\sqrt{\al_l}}\lambda\bigg) - \frac{q_0 }{k_l}\sqrt{\al_l \pi} e^{-\lambda^2}F_1\bigg(\frac{\sqrt{\al_s}}{\sqrt{\al_l}}\lambda\bigg) \bigg)
+T_k k_s \al_l Q\bigg(\frac{\sqrt{\al_s}}{\sqrt{\al_l}}\lambda\bigg)}{k_s \al_l Q\bigg(\frac{\sqrt{\al_s}}{\sqrt{\al_l}}\lambda\bigg) + k_l \al_s Q_1(\lambda)}\\
&=\frac{\al_s k_l Q_1(\lambda)F(\lambda)
+T_k k_s \al_l Q\bigg(\frac{\sqrt{\al_s}}{\sqrt{\al_l}}\lambda\bigg)}{k_s \al_l Q\bigg(\frac{\sqrt{\al_s}}{\sqrt{\al_l}}\lambda\bigg) + k_l \al_s Q_1(\lambda)}\\
&=\frac{\al_s k_l Q_1(\lambda)T_k
+T_k k_s \al_l Q\bigg(\frac{\sqrt{\al_s}}{\sqrt{\al_l}}\lambda\bigg)}{k_s \al_l Q\bigg(\frac{\sqrt{\al_s}}{\sqrt{\al_l}}\lambda\bigg) + k_l \al_s Q_1(\lambda)}\\
&=T_k.
\end{split}
\end{equation}
Then,
\begin{equation*}
\begin{cases}
M(\lambda)=\phi(T_k),\\
G(\lambda)=T_k,
\end{cases}
\end{equation*}
and by the uniqueness of the pair $\widetilde{T}_k,\mu$ in \eqref{condTkN}, we have $\lambda=\mu$, and $T_k=\widetilde{T}_k$.

On the other hand, taking into account that $\lambda=\mu$, we have
\begin{equation*}
\begin{split}
A_s^T(x)&=\mathscr{A}_s^T(x), \quad \forall x>0, \quad\quad\quad A_l^T(x)=\mathscr{A}_l^T(x), \quad \forall x>0,\\
A_s^C(x)&=\mathscr{A}_s^C(x), \quad \forall x>0, \quad\quad\quad A_l^C(x)=\mathscr{A}_l^C(x), \quad \forall x>0,\\
B_s^T(x)&=\mathscr{B}_s^T(x), \quad \forall x>0, \quad\quad\quad B_l^T(x)=\mathscr{B}_l^T(x), \quad \forall x>0,\\
B_s^C(x)&=\mathscr{B}_s^C(x), \quad \forall x>0, \quad\quad\quad B_l^C(x)=\mathscr{B}_l^C(x), \quad \forall x>0,
\end{split}
\end{equation*}
that is \eqref{TyCtildes}.
\end{proof}
\begin{corollary}
Under the hypothesis of Theorem \ref{equivtheo}, the coefficient $\mu$ of the free boundary $\widetilde{s}(t)$ and the initial temperature $\widetilde{T}_k$ of solidification, corresponding to the Rubinstein solution to the free boundary problem $(P_R)$ satisfies the inequalities
\begin{equation}\label{desigserflambda}
\frac{\sqrt{ \al_l}}{\sqrt{\al_s}}\frac{\widetilde{T}_k-T_1}{T_0-T_{0s}}\frac{k_s}{k_l}<erf(\mu)<\frac{\sqrt{ \al_l}}{\sqrt{\al_s}}\frac{\widetilde{T}_k-T_1}{T_0-T_{0l}}\frac{k_s}{k_l}.
\end{equation}
\end{corollary}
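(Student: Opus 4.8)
The plan is to reduce everything to an exact formula for $erf(\mu)$ and then insert the two-sided bound on $q_0$ from \eqref{cotasq0}. First I would invoke Theorem \ref{equivtheo}, whose hypotheses are exactly those assumed here, to replace the Rubinstein quantities by the corresponding problem-$(P_1)$ quantities, namely $\mu=\lambda$ and $\widetilde{T}_k=T_k$. This is the only place where the equivalence result is used, and it is precisely what allows the heat-flux datum $q_0$, which does not appear in $(P_R)$ at all, to enter the estimate.

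Next I would extract an exact expression for $erf(\mu)$. From the definition \eqref{temp0} of the fixed-face temperature, $T_1=T_k-\tfrac{q_0}{k_s}\sqrt{\al_s\pi}\,erf(\lambda)$, so solving for the error function and using $\mu=\lambda$, $\widetilde{T}_k=T_k$ yields the identity
\begin{equation*}
erf(\mu)=\frac{(\widetilde{T}_k-T_1)\,k_s}{q_0\sqrt{\al_s\pi}}.
\end{equation*}
By Remark \ref{obs} we have $\widetilde{T}_k-T_1=T_k-T_1>0$, and for the solution at hand this is a fixed positive number, so bounding $erf(\mu)$ reduces entirely to bounding the factor $1/q_0$.

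Finally I would insert the two endpoints of \eqref{cotasq0}. Since $q_0<\tfrac{(T_0-T_{0s})k_l}{\sqrt{\pi\al_l}}$, the factor $1/q_0$ exceeds $\tfrac{\sqrt{\pi\al_l}}{(T_0-T_{0s})k_l}$, and substituting into the identity gives the left inequality of \eqref{desigserflambda} after the simplification $\tfrac{k_s}{\sqrt{\al_s\pi}}\cdot\tfrac{\sqrt{\pi\al_l}}{k_l}=\tfrac{\sqrt{\al_l}}{\sqrt{\al_s}}\tfrac{k_s}{k_l}$. Symmetrically, $q_0>\tfrac{(T_0-T_{0l})k_l}{\sqrt{\pi\al_l}}$ makes $1/q_0$ smaller and yields the right inequality. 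It is worth noting that the target inequality carries $\widetilde{T}_k$ on both sides, so it is harmless that $\widetilde{T}_k$ itself depends on $q_0$ through \eqref{condTkN}: we are not claiming constant bounds, only this relation between $erf(\mu)$ and $\widetilde{T}_k$.

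I do not anticipate a genuine obstacle; once the equivalence $\mu=\lambda$, $\widetilde{T}_k=T_k$ of Theorem \ref{equivtheo} is in hand, the statement is an algebraic rearrangement of \eqref{temp0} combined with the a-priori bound \eqref{cotasq0}. The only point demanding care is the direction of the inequalities: because $erf(\mu)$ is inversely proportional to $q_0$, the upper bound on $q_0$ produces the lower bound on $erf(\mu)$ and the lower bound on $q_0$ produces the upper bound, so the endpoints of \eqref{cotasq0} map to the opposite endpoints in \eqref{desigserflambda}.
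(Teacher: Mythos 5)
Your proposal is correct and follows essentially the same route as the paper: both invoke the equivalence of Theorem \ref{equivtheo}, use \eqref{temp0} to express $q_0$ (equivalently $erf(\mu)$) in terms of $\widetilde{T}_k-T_1$, and then substitute the two-sided bound \eqref{cotasq0}, with the endpoints swapping because of the inverse proportionality. Your explicit remark about the direction reversal is a point the paper leaves implicit, but the argument is identical in substance.
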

\begin{proof}
From the solution to problem $P_R$, equivalent to problem $P_1$, by using condition \eqref{PS-AB}-v, we have \eqref{temp0}, from where,
\begin{equation*}
q_0=\frac{k_s(\widetilde{T}_k-T_1)}{\sqrt{\al_s \pi} erf(\mu)},
\end{equation*}
where $T_1$ is the temperature boundary condition for the Rubinstein solution.

By the equivalence between problems $(P_1)$ and $(P_R)$, $q_0$ must satisfy condition \eqref{cotasq0}.
Then, the next inequalities hold:
\begin{equation*}
\frac{(T_0-T_{0l})k_l}{\sqrt{\pi \al_l}}<\frac{k_s(T_k-T_1)}{\sqrt{\al_s \pi} erf(\mu)}<\frac{(T_0-T_{0s})k_l}{\sqrt{\pi \al_l}},
\end{equation*}
or equivalently \eqref{desigserflambda}.
\end{proof}
\begin{remark}\label{Remark2}
The inequalities \eqref{desigserflambda} has a complete physical meaning for the solution to problem $P_R$ when the corresponding parameters verifies the inequality
\begin{equation*}
\frac{\sqrt{ \al_l}}{\sqrt{\al_s}}\frac{\widetilde{T}_k-T_1}{T_0-T_{0l}}\frac{k_s}{k_l}<1.
\end{equation*}
\end{remark}
\begin{remark}
The results in this section generalizes \cite{Tar:1981}. In fact, if $C(x,t)$ is constant and if we consider only the thermal problem, then condition \eqref{cotasq0} is the inequality given in \cite[Lemma 1]{Tar:1981} obtained for the two-phase Stefan problem with a heat flux condition at $x=0$.
\end{remark}

\section{Explicit solution for the binary alloy solidification problem with a convective boundary condition}\label{simsol2}
In this Section we consider the next two-phase Rubinstein type binary-alloy solidification problem defined by (Problem $(P_2)$):
\begin{equation}\label{PS-ABalt}
 \begin{array}{llll}
i. & \al_s T_{s_{xx}}=T_{s_t} & 0<x<s(t),& t>0,\\
ii. & \al_l T_{l_{xx}}=T_{l_t} & s(t)<x,& t>0,\\
iii. & d_s C_{s_{xx}}=C_{s_t} & 0<x<s(t),& t>0,\\
iv. & d_l C_{l_{xx}}=C_{l_t} & s(t)<x,& t>0,\\
v. & k_s T_{s_x}(0,t)=\frac{h_0}{\sqrt{t}}\bigg(T_s(0,t)-T_\infty\bigg) & (h_0>0),& t>0,\\
vi. & T_s(s(t),t)= T_l(s(t),t)=T_k & & t>0,\\
vii. & T_l(x,0)=T_0 & T_A<T_0<T_B,& x>0,\\
viii. & T_l(\infty,t)=T_0 & T_A<T_0<T_B,& t>0,\\
ix. & C_{s_x}(0,t)= 0 & & t>0,\\
x. & C_s(s(t),t)= f_s(T_k) & & t>0,\\
xi. & C_l(x,0)=C_0 &  x>0, &\\
xii. & C_l(s(t),t)= f_l(T_k) & & t>0,\\
xiii. & k_s T_{s_x}(s(t),t)-k_l T_{l_x}(s(t),t)=\gamma\rho s'(t) & & t>0,\\
xiv. & d_l C_{l_x}(s(t),t)-d_s C_{s_x}(s(t),t)=\bigg[f_s(T_k)-f_l(T_k)\bigg] s'(t) & & t>0,\\
\end{array}
\end{equation}
where $T_\infty$ is the bulk temperature at a large distance from the fixed face $x=0$.

Following the study done in Section \ref{simsol1}, we can obtain the next result.
\begin{theorem}\label{teoprincipal2}
If the coefficient $ h_0 $ which characterized the convective boundary condition \eqref{PS-ABalt}-$(v)$ verifies the following inequalities
\begin{equation}\label{cotash0}
\frac{(T_0-T_{0l})k_l}{(T_{0l}-T_\infty)\sqrt{\pi \al_l}}<h_0<\frac{(T_0-T_{0s})k_l}{(T_{0s}-T_\infty)\sqrt{\pi \al_l}},
\end{equation}
where $T_{0l}=f_l^{-1}(C_0)$ and $T_{0s}=f_s^{-1}(C_0)$, where $f_l^{-1}(C)=T$ is the inverse function of $f_l$ and $f_s^{-1}(C)=T$ is the inverse function of $f_s$ respectively, the free boundary problem \eqref{PS-ABalt} has a unique similarity type solution given by:
\begin{equation}\label{FBP1N}
\widehat{s}(t)=2\delta \sqrt{\al_s t}, \quad t>0,
\end{equation}
\begin{equation}\label{FormTsN}
\widehat{T}_s(x,t)=\widehat{T}_k+\frac{h_0 \sqrt{\pi\al_s}(\widehat{T}_k - T_\infty)}{k_s+h_0 \sqrt{\pi\al_s}erf(\delta)}\bigg( erf\bigg(\frac{x}{2\sqrt{\al_s t}}\bigg)-erf(\delta)\bigg), \quad 0<x<\widehat{s}(t),\quad t>0,
\end{equation}
\begin{equation}\label{FormTlN}
\widehat{T}_l(x,t)=T_0+\frac{\widehat{T}_k-T_0}{erfc\bigg(\frac{\sqrt{\al_s}}{\sqrt{\al_l}}\delta\bigg)}erfc\bigg(\frac{x}{2 \sqrt{\al_l t}}\bigg), \quad \widehat{s}(t)<x,\quad t>0,
\end{equation}
\begin{equation}\label{FormCsN}
\widehat{C}_s(x,t)=f_s(\widehat{T}_k), \quad 0<x<\widehat{s}(t),\quad t>0,
\end{equation}
\begin{equation}\label{FormClN}
\widehat{C}_l(x,t)=C_0+\frac{f_l(\widehat{T}_k)-C_0}{erfc\bigg(\frac{\sqrt{\al_s}}{\sqrt{d_l}}\delta\bigg)}erfc\bigg(\frac{x}{2 \sqrt{d_l t}}\bigg), \quad \widehat{s}(t)<x,\quad t>0,
\end{equation}
where the unknowns $ \widehat{T}_k $ and $ \delta $ (coefficient that characterizes the free boundary $ x = s (t) $) satisfy the following equations:
\begin{equation}\label{eqforlambdaN}
\widehat{T}_k=W(\delta),\quad M(\delta)=\phi(\widehat{T}_k),\quad \delta>0,\quad T_A<\widehat{T}_k<T_B,
\end{equation}
where the real function $ W $, is defined as follows:
\begin{equation*}
 W(x)=T_\infty+\frac{T_0 -T_\infty}{F_2(x)+1}+\frac{\gamma\rho \sqrt{\pi\al_s \al_l}}{H(x)}
\end{equation*}
\begin{equation*}
F_2(x)=\frac{h_0 k_s \sqrt{\pi \al_l} e^{-x^2}F_1\bigg(\frac{\sqrt{\al_s}}{\sqrt{\al_l}}x\bigg)}{k_l (k_s+h_0 \sqrt{\pi\al_s}erf(x))},
\end{equation*}
\begin{equation*}
H(x)= \frac{h_0 k_s \sqrt{\pi \al_l}}{x e^{x^2}(k_s+h_0 \sqrt{\pi\al_s}erf(x))}+\frac{k_l\sqrt{\pi \al_s}}{\sqrt{\al_l}Q\bigg(\frac{\sqrt{\al_s}}{\sqrt{\al_l}}x\bigg)}.
\end{equation*}
\end{theorem}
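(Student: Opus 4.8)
The plan is to mirror the proof of Theorem \ref{teoprincipal} step by step; the only genuinely new ingredient is the convective condition \eqref{PS-ABalt}-$(v)$ at the fixed face, which replaces the role of the heat flux and turns the auxiliary function $F$ into $W$. I would begin with the same similarity ansatz \eqref{FormSolExplic}, now with $\delta$ and $\widehat{T}_k$ in place of $\lambda$ and $T_k$; this automatically solves the diffusion equations $(i)$--$(iv)$. Imposing $\widehat{T}_s(\widehat{s}(t),t)=\widehat{T}_k$ forces $\widehat{s}(t)=2\delta\sqrt{\al_s t}$ exactly as before. The concentration field is determined word for word as in Theorem \ref{teoprincipal}: condition $(ix)$ gives $B_s^C=0$, $(x)$ gives $\widehat{C}_s\equiv f_s(\widehat{T}_k)$, and $(xi)$--$(xii)$ give \eqref{FormClN}; similarly $(vi)$, $(vii)$, $(viii)$ produce the liquid temperature \eqref{FormTlN}. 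Since these fields are structurally identical to those of Problem $(P_1)$, the mass balance $(xiv)$ reproduces verbatim the equation $M(\delta)=\phi(\widehat{T}_k)$.

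The first new computation is at $x=0$. From the ansatz, $T_{s_x}(0,t)=B_s^T/\sqrt{\pi\al_s t}$ and $T_s(0,t)=A_s^T$, so after cancelling $1/\sqrt{t}$ the condition \eqref{PS-ABalt}-$(v)$ becomes the algebraic relation $k_s B_s^T/\sqrt{\pi\al_s}=h_0(A_s^T-T_\infty)$. Combining it with $A_s^T+B_s^T\,erf(\delta)=\widehat{T}_k$ from $(vi)$ gives a $2\times 2$ linear system whose solution is
\begin{equation*}
B_s^T=\frac{h_0\sqrt{\pi\al_s}(\widehat{T}_k-T_\infty)}{k_s+h_0\sqrt{\pi\al_s}\,erf(\delta)},\qquad A_s^T=\widehat{T}_k-B_s^T\,erf(\delta),
\end{equation*}
which is exactly \eqref{FormTsN}.

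The core is the energy balance $(xiii)$. Evaluating $T_{s_x}$, $T_{l_x}$ at $x=\widehat{s}(t)$ and using $\widehat{s}'(t)=\delta\sqrt{\al_s}/\sqrt{t}$, it reduces (after multiplying by $\sqrt{t}$) to
\begin{equation*}
\frac{k_s B_s^T}{\sqrt{\pi\al_s}}e^{-\delta^2}-\frac{k_l B_l^T}{\sqrt{\pi\al_l}}e^{-\al_s\delta^2/\al_l}=\gamma\rho\,\delta\sqrt{\al_s},
\end{equation*}
with $B_l^T=(T_0-\widehat{T}_k)/erfc(\sqrt{\al_s/\al_l}\,\delta)$. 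Substituting the two coefficients, the relation is linear in $\widehat{T}_k$; writing it as $P(\widehat{T}_k-T_\infty)-R(T_0-\widehat{T}_k)=\gamma\rho\,\delta\sqrt{\al_s}$ with
\begin{equation*}
P=\frac{k_s h_0 e^{-\delta^2}}{k_s+h_0\sqrt{\pi\al_s}\,erf(\delta)},\qquad R=\frac{k_l e^{-\al_s\delta^2/\al_l}}{\sqrt{\pi\al_l}\,erfc(\sqrt{\al_s/\al_l}\,\delta)},
\end{equation*}
and solving gives $\widehat{T}_k=T_\infty+\dfrac{T_0-T_\infty}{P/R+1}+\dfrac{\gamma\rho\,\delta\sqrt{\al_s}}{P+R}$. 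The remaining bookkeeping, which I expect to be the main effort although it is routine, is to verify with $F_1(x)=erfc(x)e^{x^2}$ and $Q(x)=\sqrt{\pi}xe^{x^2}erfc(x)$ that $P/R=F_2(\delta)$ and $\sqrt{\pi\al_l}\,(P+R)/\delta=H(\delta)$; these two identities turn the last expression into $\widehat{T}_k=W(\delta)$, establishing \eqref{eqforlambdaN}.

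Finally, for existence and uniqueness I would reproduce the monotonicity argument of Proposition \ref{propsfcs}. The numerator of $F_2$ is decreasing (product of the decreasing $e^{-x^2}$ and $F_1$) while its denominator is increasing, so $F_2$ is decreasing and, since \eqref{cotash0} forces $T_\infty<T_{0s}<T_{0l}<T_0$, the term $(T_0-T_\infty)/(F_2+1)$ is increasing; likewise each summand of $H$ is decreasing (the first because $xe^{x^2}(k_s+h_0\sqrt{\pi\al_s}erf(x))$ increases, the second because $Q$ increases), so $1/H$ is increasing. Hence $W$ is strictly increasing with
\begin{equation*}
W(0^+)=T_\infty+\frac{(T_0-T_\infty)k_l}{k_l+h_0\sqrt{\pi\al_l}},\qquad W(+\infty)=T_0+\frac{\gamma\rho\al_l}{k_l}.
\end{equation*}
Composing the two interface equations into $M(\delta)=\phi(W(\delta))$, the left member decreases from $+\infty$ to $1$ while the right member is increasing (a composition of increasing maps), so they meet at a unique $\delta>0$ as soon as $T_{0s}<W(0^+)<T_{0l}$ (the blow-up $\phi\to+\infty$ as $W\to T_{0l}^-$ guarantees the crossing occurs while $W(\delta)<T_{0l}$). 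A direct computation shows $W(0^+)<T_{0l}$ and $W(0^+)>T_{0s}$ are respectively the lower and upper bounds in \eqref{cotash0}, so the double inequality $T_{0s}<W(0^+)<T_{0l}$ is equivalent to \eqref{cotash0}, which closes the argument.
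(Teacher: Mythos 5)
Your proposal is correct and follows essentially the same route as the paper: the same similarity ansatz, the same elimination of the coefficients from the convective condition and the Stefan conditions $(xiii)$--$(xiv)$ leading to $\widehat{T}_k=W(\delta)$ and $M(\delta)=\phi(\widehat{T}_k)$, and the same monotonicity argument ($M$ decreasing from $+\infty$ to $1$, $\phi\circ W$ increasing) for existence and uniqueness; your identities $P/R=F_2(\delta)$ and $\sqrt{\pi\al_l}\,(P+R)/\delta=H(\delta)$ check out. Your closing observation that only $T_{0s}<W(0^+)<T_{0l}$ is needed (the blow-up of $\phi$ at $T_{0l}^-$ forcing the crossing before $W$ reaches $T_{0l}$) is in fact a cleaner statement of the endpoint condition than the paper's ``$T_{0s}\le W(0)<W(+\infty)<T_{0l}$'', since $W(+\infty)=T_0+\gamma\rho\al_l/k_l>T_{0l}$ always.
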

The proof of the theorem is based on the next proposition.
\begin{proposition}\label{propsfcsN}
The following properties are valid,
\begin{enumerate}[(a)]
\item\label{propG} $F_2$ is an strictly decreasing function, with $F_2(0^+)=\frac{h_0}{k_l}\sqrt{\al_l\pi}$ and $F_2(+\infty)=0$.

\item\label{propH} $H$ is an strictly decreasing function, with $H(0^+)=+\infty$ and $H(+\infty)=\frac{\sqrt{\pi\al_s}}{\sqrt{\al_l}}k_l$.

\item\label{propW} $W$ is an strictly increasing function, with $W(0^+)=T_\infty+\frac{T_0-T_\infty}{1+\frac{h_0\sqrt{\pi \al_l}}{k_l}}$ and $ W(+\infty)=T_0+\frac{\gamma \rho \al_l}{k_l}$.
\end{enumerate}
\end{proposition}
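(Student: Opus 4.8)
The plan is to verify the three items in turn, reducing every statement to the elementary behaviour of $erf$, $Q$ and $F_1$ already recorded in Proposition \ref{propsfcs}, so that only products, quotients and compositions of monotone positive functions remain to be controlled. Throughout, all quantities $h_0,k_s,k_l,\al_s,\al_l,\gamma,\rho$ are positive, which is what lets me read off signs freely.

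For part (\ref{propG}) I would treat $F_2$ as a quotient. The numerator $h_0 k_s\sqrt{\pi\al_l}\,e^{-x^2}F_1\bigl(\tfrac{\sqrt{\al_s}}{\sqrt{\al_l}}x\bigr)$ is a product of the positive, strictly decreasing factors $e^{-x^2}$ and $F_1\bigl(\tfrac{\sqrt{\al_s}}{\sqrt{\al_l}}x\bigr)$ (the latter decreasing because $F_1$ is decreasing by Proposition \ref{propsfcs}(c) and its argument increases in $x$), hence is positive and strictly decreasing; the denominator $k_l\bigl(k_s+h_0\sqrt{\pi\al_s}\,erf(x)\bigr)$ is positive and strictly increasing since $erf$ is increasing. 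A positive decreasing function over a positive increasing one is strictly decreasing, giving the monotonicity. The limits then follow by substituting $e^{-0}=1$, $F_1(0^+)=1$, $erf(0^+)=0$ as $x\to0^+$, and $e^{-x^2}\to0$, $F_1(+\infty)=0$ as $x\to+\infty$.

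For part (\ref{propH}) I would split $H=H_1+H_2$. In $H_1$ the denominator $x\,e^{x^2}\bigl(k_s+h_0\sqrt{\pi\al_s}\,erf(x)\bigr)$ is a product of positive strictly increasing functions, so $H_1$ is strictly decreasing; in $H_2$ the factor $Q\bigl(\tfrac{\sqrt{\al_s}}{\sqrt{\al_l}}x\bigr)$ is strictly increasing by Proposition \ref{propsfcs}(b), so $H_2$, a positive constant over it, is strictly decreasing, and hence $H$ is strictly decreasing. As $x\to0^+$ the denominator of $H_1$ tends to $0^+$ through the factor $x$ while $Q(0^+)=0$, so both summands blow up and $H(0^+)=+\infty$; as $x\to+\infty$ the first summand vanishes and $Q(+\infty)=1$ forces $H_2\to\tfrac{\sqrt{\pi\al_s}}{\sqrt{\al_l}}k_l$. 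Part (\ref{propW}) is then assembled from these: writing $W(x)=T_\infty+\tfrac{T_0-T_\infty}{F_2(x)+1}+\tfrac{\gamma\rho\sqrt{\pi\al_s\al_l}}{H(x)}$, the map $x\mapsto\tfrac{1}{F_2(x)+1}$ is increasing because $F_2+1$ is positive and decreasing, and $x\mapsto\tfrac{1}{H(x)}$ is increasing because $H$ is positive and decreasing; multiplying by the positive constants $T_0-T_\infty$ and $\gamma\rho\sqrt{\pi\al_s\al_l}$ preserves monotonicity, so $W$ is strictly increasing. The limit values come from inserting $F_2(0^+)=\tfrac{h_0}{k_l}\sqrt{\al_l\pi}$ and $H(0^+)=+\infty$ at $0^+$, and $F_2(+\infty)=0$ with $H(+\infty)=\tfrac{\sqrt{\pi\al_s}}{\sqrt{\al_l}}k_l$ at $+\infty$, the latter simplifying after cancelling $\sqrt{\pi\al_s}$ to $T_0+\tfrac{\gamma\rho\al_l}{k_l}$.

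The one ingredient that is not purely formal is the sign condition $T_0-T_\infty>0$ used in part (\ref{propW}): without it the first summand of $W$ would be decreasing and the monotonicity could fail. I expect this to be the main obstacle to state cleanly, and I would justify it from the physical setup together with the consistency of hypothesis \eqref{cotash0}. Indeed, $T_{0s}<T_{0l}$ holds since $f_l<f_s$ (as in the Corollary of Section \ref{simsol1}), and for the upper bound in \eqref{cotash0} to leave a positive range for $h_0$ one needs $T_0-T_{0s}$ and $T_{0s}-T_\infty$ of the same sign; the solidification regime $T_\infty<T_{0s}<T_0$ (the convective environment cools the melt below its initial temperature) makes both positive, whence $T_0>T_\infty$ and the argument above goes through.
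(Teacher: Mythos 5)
Your proof is correct and follows essentially the same route as the paper, whose own proof of this proposition is a single line deferring everything to Proposition \ref{propsfcs}; you have simply supplied the elementary monotonicity bookkeeping (decreasing numerator over increasing denominator for $F_2$, the two-summand decomposition of $H$, and the assembly of $W$) that the authors leave implicit. Your observation that the sign condition $T_0-T_\infty>0$ is genuinely needed for the monotonicity of $W$, and your justification of it from the consistency of \eqref{cotash0} with the regime $T_\infty<T_{0s}<T_0$, is a worthwhile addition that the paper does not address at all.
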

\begin{proof}
We obtain $(a)$ and $(b)$ by Proposition \ref{propsfcs}, and $(c)$ by using properties $(a)$ and $(b)$.
\end{proof}
We now prove Theorem \ref{teoprincipal2}.
\begin{proof}
The similarity solutions to the heat equation $ \al u_{xx} = u_t $ has now the form $ u (x, t) = D + E erf \bigg (\frac{x}{2 \sqrt {\al t}} \bigg) $, where the real coefficients $D$ and $E$ must be determined.
Then, we can write
\begin{equation}
\begin{split}
\widehat{T}_s(x,t)=D_s^T+E_s^T erf\bigg(\frac{x}{2 \sqrt{\al_s t}}\bigg), \quad
\widehat{T}_l(x,t)=D_l^T+E_l^T erf\bigg(\frac{x}{2 \sqrt{\al_l t}}\bigg),\\
\widehat{C}_s(x,t)=D_s^C+E_s^C erf\bigg(\frac{x}{2 \sqrt{d_s t}}\bigg), \quad
\widehat{C}_l(x,t)=D_l^C+E_l^C erf\bigg(\frac{x}{2 \sqrt{d_l t}}\bigg).
\end{split}
\end{equation}
By \eqref{PS-ABalt}-$vi$ it results
\begin{equation*}
\widehat{T}_s(s(t),t)= D_s^T+E_s^T erf\bigg(\frac{s(t)}{2 \sqrt{\al_s t}}\bigg)=\widehat{T}_k,
\end{equation*}
where $\widehat{T}_k$ is a constant to be determined. Then it follows that $s(t)=2\delta \sqrt{\al_s t}$ for some $\delta >0$, and again by \eqref{PS-AB}-$vi$, we get
\begin{equation}\label{TsF2}
\widehat{T}_s(s(t),t)= D_s^T+E_s^T erf(\delta)=\widehat{T}_k,
\end{equation}
\begin{equation}\label{TlF2}
\widehat{T}_l(s(t),t)= D_l^T+E_l^T erf\bigg(\frac{\sqrt{\al_s}}{\sqrt{\al_l}}\delta\bigg)=\widehat{T}_k.
\end{equation}
On the other hand,
\begin{equation*}
\widehat{T}_s(0,t)=D_s^T,
\end{equation*}
and then, from \eqref{PS-ABalt}-$v$, we obtain that
\begin{equation*}
k_s T_{s_x}(0,t)=k_s E_s^T \frac{2}{\sqrt{\pi}}\frac{1}{2 \sqrt{\al_s t}}=\frac{k_s E_s^T}{\sqrt{\pi\al_s t}}=\frac{h_0}{\sqrt{t}}\bigg(D_s^T-T_\infty\bigg) .
\end{equation*}
that is
\begin{equation}\label{DsT}
D_s^T=T_\infty+\frac{k_s E_s^T}{h_0\sqrt{\pi\al_s}}.
\end{equation}
Then, replacing \eqref{DsT} in \eqref{TsF2}, we have
\begin{equation*}\label{formEst}
E_s^T=\frac{h_0\sqrt{\pi\al_s}(\widehat{T}_k-T_\infty)}{k_s+h_0\sqrt{\pi\al_s}erf(\delta) },
\end{equation*}
and then we have
\begin{equation*}\label{formDst}
D_s^T=T_\infty+\frac{k_s(\widehat{T}_k-T_\infty)}{k_s+h_0\sqrt{\pi\al_s}erf(\delta) }.
\end{equation*}
From \eqref{PS-ABalt}-$vii$, it follows that
\begin{equation}\label{Tlt02}
T_0=\widehat{T}_l(x,0)=D_l^T+E_l^T.
\end{equation}
Subtracting \eqref{TlF2} and \eqref{Tlt02}, we obtain that
\begin{equation}\label{ElT}
E_l^T =\frac{T_0-\widehat{T}_k}{erfc\bigg(\frac{\sqrt{\al_s}}{\sqrt{\al_l}}\delta\bigg)}.
\end{equation}
Replacing \eqref{ElT} in \eqref{Tlt02}, we conclude that
\begin{equation*}
D_l^T =T_0+\frac{\widehat{T}_k-T_0}{erfc\bigg(\frac{\sqrt{\al_s}}{\sqrt{\al_l}}\delta\bigg)}.
\end{equation*}
Therefore,
\begin{equation*}
\widehat{T}_s(x,t)=\widehat{T}_k+\frac{h_0 \sqrt{\pi\al_s}(\widehat{T}_k - T_\infty)}{k_s+h_0 \sqrt{\pi\al_s}erf(\delta)}\bigg( erf\bigg(\frac{x}{2\sqrt{\al_s t}}\bigg)-erf(\delta)\bigg),
\end{equation*}
and
\begin{equation*}
\widehat{T}_l(x,t)=T_0+\frac{\widehat{T}_k-T_0}{erfc\bigg(\frac{\sqrt{\al_s}}{\sqrt{\al_l}}\delta\bigg)}erfc\bigg(\frac{x}{2 \sqrt{\al_l t}}\bigg),
\end{equation*}
that is, \eqref{FormTsN} and \eqref{FormTlN}.

Similarly, considering the equalities $ix$-$xii$ from \eqref{PS-AB}, we deduce that
\begin{equation*}
D_s^C = f_s(\widehat{T}_k),\quad\quad\quad E_s^C=0,
\end{equation*}
\begin{equation*}
D_l^C =C_0+\frac{f_l(\widehat{T}_k)-C_0}{erfc\bigg(\frac{\sqrt{\al_s}}{\sqrt{d_l}}\delta\bigg)},\quad\quad\quad E_l^C=\frac{C_0-f_l(\widehat{T}_k)}{erfc\bigg(\frac{\sqrt{\al_s}}{\sqrt{d_l}}\delta\bigg)},
\end{equation*}
that is \eqref{FormCsN} and \eqref{FormClN}.

From \eqref{PS-ABalt}-$xiii$, we have
\begin{equation}
 k_s\frac{h_0 (\widehat{T}_k - T_\infty)}{k_s+h_0 \sqrt{\pi\al_s}erf(\delta)}e^{-\delta^2}+k_l \frac{\widehat{T}_k-T_0}{erfc\bigg(\frac{\sqrt{\al_s}}{\sqrt{\al_l}}\delta\bigg)}\frac{1}{\sqrt{\pi \al_l}}e^{-\bigg(\frac{\sqrt{\al_s}}{\al_l}\delta\bigg)^2}=\gamma\rho \delta\sqrt{\al_s},
\end{equation}
from where,
\begin{equation}\label{TkW}
\begin{split}
\widehat{T}_k&=\frac{\gamma\rho \sqrt{\pi\al_s \al_l}(k_s+h_0 \sqrt{\pi\al_s }erf(\delta))\delta erfc\bigg(\frac{\sqrt{\al_s}}{\sqrt{\al_l}}\delta\bigg)}{ h_0 k_s \sqrt{\pi \al_l}e^{-\delta^2}erfc\bigg(\frac{\sqrt{\al_s}}{\sqrt{\al_l}}\delta\bigg)+k_l e^{-\bigg(\frac{\sqrt{\al_s}}{\sqrt{\al_l}}\delta\bigg)^2}(k_s+h_0 \sqrt{\pi\al_s}erf(\delta))}\\
&\quad +\frac{h_0 k_s T_\infty \sqrt{\pi \al_l }e^{-\delta^2}erfc\bigg(\frac{\sqrt{\al_s}}{\sqrt{\al_l}}\delta\bigg)+k_l T_0 (k_s+h_0 \sqrt{\pi\al_s }erf(\delta))e^{-\bigg(\frac{\sqrt{\al_s}}{\al_l}\delta\bigg)^2}}{ h_0 k_s \sqrt{\pi \al_l} e^{-\delta^2} erfc\bigg(\frac{\sqrt{\al_s}}{\sqrt{\al_l}}\delta\bigg)+k_l (k_s+h_0 \sqrt{\pi\al_s}erf(\delta))e^{-\bigg(\frac{\sqrt{\al_s}}{\al_l}\delta\bigg)^2}}\\
&=\frac{\gamma\rho \sqrt{\pi\al_s \al_l}(k_s+h_0 \sqrt{\pi\al_s }erf(\delta))\delta }{ h_0 k_s \sqrt{\pi \al_l}e^{-\delta^2}+k_l \frac{e^{-\bigg(\frac{\sqrt{\al_s}}{\sqrt{\al_l}}\delta\bigg)^2}}{erfc\bigg(\frac{\sqrt{\al_s}}{\sqrt{\al_l}}\delta\bigg)}(k_s+h_0 \sqrt{\pi\al_s}erf(\delta))}\\
&\quad +\frac{\frac{h_0 k_s  \sqrt{\pi \al_l }e^{-\delta^2}e^{\bigg(\frac{\sqrt{\al_s}}{\al_l}\delta\bigg)^2} erfc\bigg(\frac{\sqrt{\al_s}}{\sqrt{\al_l}}\delta\bigg)}{k_l (k_s+h_0 \sqrt{\pi\al_s}erf(\delta))}T_\infty+T_0}{\frac{h_0 k_s \sqrt{\pi \al_l} e^{-\delta^2}e^{\bigg(\frac{\sqrt{\al_s}}{\al_l}\delta\bigg)^2} erfc\bigg(\frac{\sqrt{\al_s}}{\sqrt{\al_l}}\delta\bigg)}{k_l (k_s+h_0 \sqrt{\pi\al_s}erf(\delta))}+1}\\
&=\frac{\gamma\rho \sqrt{\pi\al_s \al_l}}{H(\delta)}+T_\infty+\frac{T_0 -T_\infty}{F_2(\delta)+1}\\
&=W(\delta).
\end{split}
\end{equation}
From \eqref{PS-ABalt}-$xiv$, we obtain
\begin{equation*}
\frac{C_0-f_l(\widehat{T}_k)}{erfc\bigg(\frac{\sqrt{\al_s}}{\sqrt{d_l}}\delta\bigg)}\frac{\sqrt{ d_l }}{\sqrt{\pi}}e^{-\bigg(\frac{\sqrt{\al_s}}{\sqrt{d_l}\delta}\bigg)^2}=\big[f_s(\widehat{T}_k)-f_l(\widehat{T}_k)\big] \delta\sqrt{\al_s}
\end{equation*}
that is
\begin{equation}\label{eq2N}
\frac{1}{Q\bigg(\frac{\sqrt{\al_s}}{\sqrt{d_l}}\delta\bigg)}=\phi(\widehat{T}_k),
\end{equation}
and then \eqref{eqforlambdaN} holds.

By \eqref{TkW} and \eqref{eq2N}, we have
\begin{equation*}
M(\delta)=\phi(W(\delta)).
\end{equation*}
We know that $M$ is a strictly decreasing function with, $M(0^+)=+\infty, M(+\infty)=1$.
On the other hand, by Proposition \ref{propsfcsN}-\ref{propW}, $W$ is an strictly increasing function.
Then, taking into account Proposition \ref{propsfcs}-\ref{propsfcs-e}, we can assure the existence and uniqueness of $\delta$ verifying \eqref{eqforlambdaN}, if $ T_{0s} \leq W(0) < W(+\infty) <T_{0l} $, or equivalently \eqref{cotash0}, and the thesis holds.
\end{proof}

\section{An Inequality for the Coefficient $\mu$ of the Rubinstein Free Boundary}\label{equivprob2}
For the solution given in Theorem \ref{teoprincipal2}, the temperature in the fixed face $x=0$ is given by
\begin{equation}\label{temp0N}
\widehat{T}_1 = \widehat{T}_s(0,t)=\widehat{T}_k-\frac{h_0 \sqrt{\pi\al_s}erf(\delta)(\widehat{T}_k - T_\infty)}{k_s+h_0 \sqrt{\pi\al_s}erf(\delta)}.
\end{equation}
Since $\widehat{T}_1<\widehat{T}_k$, we can consider the problem $(P_R)$ defined by \eqref{PS-ABalt}$(i)$-$(iv)$,$(\widetilde{v})$,$(vi)$-$(xiv)$, where
\begin{equation}\label{convcond}
\begin{array}{lll}
\widetilde{v}. & \widehat{T}_s(0,t)=T_1,& t>0.
\end{array}
\end{equation}
which has a unique solution, known as Rubinstein solution, and was given by \eqref{RubSol}-\eqref{condTkN}.
\begin{theorem}\label{equiv1N}
If $T_1=\widehat{T}_1$, under condition \eqref{cotash0} and \eqref{temp0N}, we obtain that the two free boundaries problems $(P_2)$ and $(P_R)$ are equivalents, that is
\begin{equation}\label{condmudelta}
\mu=\delta, \quad \widehat{T}_k=\widetilde{T}_k,
\end{equation}
\begin{equation}\label{TyCtildesN}
\widehat{T}_s(x,t)=\widetilde{T}_s(x,t), \quad \widehat{T}_l(x,t)=\widetilde{T}_l(x,t), \quad \widehat{C}_s(x,t)=\widetilde{C}_s(x,t), \quad \widehat{C}_l(x,t)=\widetilde{C}_l(x,t).
\end{equation}
\end{theorem}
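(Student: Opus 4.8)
The plan is to follow the same strategy used in the proof of Theorem~\ref{equivtheo}. Both the convective problem $(P_2)$ and the Rubinstein problem $(P_R)$ share the identical concentration balance $M(\cdot)=\phi(\cdot)$ (compare the second equation of \eqref{eqforlambdaN} with the second equation of \eqref{condTkN}), so the entire burden of the equivalence rests on reconciling the two thermal equations $\widehat{T}_k=W(\delta)$ and $\widetilde{T}_k=G(\mu)$.

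First I would start from the two equations characterizing the solution of $(P_2)$, namely $\widehat{T}_k=W(\delta)$ from \eqref{TkW} and $M(\delta)=\phi(\widehat{T}_k)$ from \eqref{eq2N}. The goal is to show that the pair $(\widehat{T}_k,\delta)$ also satisfies the Rubinstein system \eqref{condTkN}; since the second equation is already common to both systems, it remains only to verify that $G(\delta)=\widehat{T}_k$ once the Dirichlet datum is fixed as $T_1=\widehat{T}_1$.

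Second, I would substitute $T_1=\widehat{T}_1$ into the definition of $G$. Simplifying \eqref{temp0N} first yields the convenient form
\[
\widehat{T}_1=\frac{k_s\,\widehat{T}_k+h_0\sqrt{\pi\al_s}\,erf(\delta)\,T_\infty}{k_s+h_0\sqrt{\pi\al_s}\,erf(\delta)}.
\]
Inserting this into the numerator of $G(\delta)$ and collecting the terms that carry the factor $Q\!\left(\frac{\sqrt{\al_s}}{\sqrt{\al_l}}\delta\right)$, I would reorganize the expression so that the very combination defining $W(\delta)$ in \eqref{TkW}---encoded through the auxiliary functions $H(\delta)$ and $F_2(\delta)$---is exposed. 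Replacing that combination by $\widehat{T}_k$ via $\widehat{T}_k=W(\delta)$, the numerator factors as $\widehat{T}_k$ times the denominator, which gives $G(\delta)=\widehat{T}_k$. This algebraic reduction, mirroring the chain of equalities \eqref{cuentasG}, is the main obstacle: one must propagate the convective prefactor $k_s/\bigl(k_s+h_0\sqrt{\pi\al_s}\,erf(\delta)\bigr)$ through every term and recognise the structures $H(\delta)$ and $F_2(\delta)$ inside the resulting fractions, which is noticeably more intricate than the plain substitution that sufficed for the heat-flux datum in Theorem~\ref{equivtheo}.

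Finally, having established both $G(\delta)=\widehat{T}_k$ and $M(\delta)=\phi(\widehat{T}_k)$, I would invoke the uniqueness of the pair $(\widetilde{T}_k,\mu)$ solving \eqref{condTkN} to conclude $\mu=\delta$ and $\widetilde{T}_k=\widehat{T}_k$, which is \eqref{condmudelta}. The identity \eqref{TyCtildesN} then follows at once: every coefficient of the Rubinstein solution \eqref{RubSol}--\eqref{condTkN} depends only on $\mu$ and $\widetilde{T}_k$, while every coefficient of the solution to $(P_2)$ depends only on $\delta$ and $\widehat{T}_k$; since these arguments now coincide, the corresponding coefficients are equal, and hence so are all four temperature and concentration fields.
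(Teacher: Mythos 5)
Your proposal is correct and follows essentially the same route as the paper: both reduce the theorem to verifying $G(\delta)=\widehat{T}_k$ under $T_1=\widehat{T}_1$ (using \eqref{temp0N} and the relation $\widehat{T}_k=W(\delta)$ from \eqref{TkW}) and then invoke the uniqueness of the pair $(\widetilde{T}_k,\mu)$ in \eqref{condTkN}, with the coefficient identities giving \eqref{TyCtildesN}. The only cosmetic difference is the direction of the algebra — you expand $G(\delta)$ directly with the simplified form of $\widehat{T}_1$, whereas the paper starts from $\widehat{T}_k=(A+B)/C$ and massages the term $B$ until the numerator of $G$ appears — but these are the same computation read in opposite directions.
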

\begin{proof}
By \eqref{eqforlambdaN} we obtain that $M(\delta)=\phi(\widehat{T}_k)$. We prove now that $G(\delta)=\widehat{T}_k$.

Observe that,
\begin{equation}\label{ecequivConv1}
\widehat{T}_k=\frac{k_s \al_l Q\bigg(\frac{\sqrt{\al_s}}{\sqrt{\al_l}}\delta\bigg)\widehat{T}_k + k_l \al_s Q_1(\delta)\widehat{T}_k}{k_s \al_l Q\bigg(\frac{\sqrt{\al_s}}{\sqrt{\al_l}}\delta\bigg) + k_l \al_s Q_1(\delta)}=\frac{A+B}{C}.
\end{equation}
Now, we work with $B$.
\begin{equation}\label{ecequivConv2}
\begin{split}
B&= k_l \al_s Q_1(\delta)\widehat{T}_k\\
&= k_l \al_s Q_1(\delta)\widehat{T}_k\pm \frac{h_0 k_s \al_l \sqrt{\pi\al_s}erf(\delta)}{k_s+h_0\sqrt{\pi\al_s}erf(\delta)}Q\bigg(\frac{\sqrt{\al_s}}{\sqrt{\al_l}}\delta\bigg)\widehat{T}_k\\
&= \frac{k_l \al_s Q_1(\delta)[k_s+h_0\sqrt{\pi\al_s}erf(\delta)]+h_0 k_s \al_l \sqrt{\pi\al_s}erf(\delta)Q\bigg(\frac{\sqrt{\al_s}}{\sqrt{\al_l}}\delta\bigg)}{k_s+h_0\sqrt{\pi\al_s}erf(\delta)}\widehat{T}_k \\
&\quad - \frac{h_0 k_s \al_l \sqrt{\pi\al_s}erf(\delta)}{k_s+h_0\sqrt{\pi\al_s}erf(\delta)}Q\bigg(\frac{\sqrt{\al_s}}{\sqrt{\al_l}}\delta\bigg)\widehat{T}_k\\
&= \frac{k_l \al_s Q_1(\delta)[k_s+h_0\sqrt{\pi\al_s}erf(\delta)]+h_0 k_s \al_l \sqrt{\pi\al_s}erf(\delta)Q\bigg(\frac{\sqrt{\al_s}}{\sqrt{\al_l}}\delta\bigg)}{k_s+h_0\sqrt{\pi\al_s}erf(\delta)}\cdot\\
&\quad\Bigg[\frac{k_l \sqrt{\al_s}\delta [k_s+h_0\sqrt{\pi\al_s}erf(\delta)]}{h_0 k_s \al_l e^{-\delta^2}Q\bigg(\frac{\sqrt{\al_s}}{\sqrt{\al_l}}\delta\bigg) +k_l\sqrt{\al_s}\delta[k_s+h_0\sqrt{\pi\al_s}erf(\delta)]}\cdot  \\
&\quad \Bigg(\frac{h_0 k_s \al_l e^{-\delta^2}Q\bigg(\frac{\sqrt{\al_s}}{\sqrt{\al_l}}\delta\bigg)}{k_l \sqrt{\al_s}\delta[k_s+h_0\sqrt{\pi\al_s}erf(\delta)]}T_\infty + T_0\Bigg) \\
&\quad +\frac{\delta\rho \al_l \sqrt{\al_s}\delta [k_s+h_0\sqrt{\pi\al_s}erf(\delta)]Q\bigg(\frac{\sqrt{\al_s}}{\sqrt{\al_l}}\delta\bigg)}{h_0 k_s \al_l e^{-\delta^2}Q\bigg(\frac{\sqrt{\al_s}}{\sqrt{\al_l}}\delta\bigg) +k_l\sqrt{\al_s}\delta[k_s+h_0\sqrt{\pi\al_s}erf(\delta)]}
\Bigg]\\
&\quad - \frac{h_0 k_s \al_l \sqrt{\pi\al_s}erf(\delta)}{k_s+h_0\sqrt{\pi\al_s}erf(\delta)}Q\bigg(\frac{\sqrt{\al_s}}{\sqrt{\al_l}}\delta\bigg)\widehat{T}_k\\
&= k_l \al_s Q_1(\delta)
\Bigg(\frac{h_0 k_s \al_l e^{-\delta^2}Q\bigg(\frac{\sqrt{\al_s}}{\sqrt{\al_l}}\delta\bigg)}{k_l \sqrt{\al_s}\delta[k_s+h_0\sqrt{\pi\al_s}erf(\delta)]}T_\infty + T_0\Bigg)
 +\gamma \rho \al_l\al_s Q_1(\delta) Q\bigg(\frac{\sqrt{\al_s}}{\sqrt{\al_l}}\delta\bigg)\\
&\quad - \frac{h_0 k_s \al_l \sqrt{\pi\al_s}erf(\delta)}{k_s+h_0\sqrt{\pi\al_s}erf(\delta)}Q\bigg(\frac{\sqrt{\al_s}}{\sqrt{\al_l}}\delta\bigg)\widehat{T}_k\\
&= -\frac{h_0 k_s \al_l \sqrt{\pi\al_s}erf(\delta)}{k_s+h_0\sqrt{\pi\al_s}erf(\delta)}Q\bigg(\frac{\sqrt{\al_s}}{\sqrt{\al_l}}\delta\bigg) (\widehat{T}_k-T_\infty) +  k_l \al_s Q_1(\delta) T_0\\
&\quad +\gamma \rho \al_l\al_s Q_1(\delta) Q\bigg(\frac{\sqrt{\al_s}}{\sqrt{\al_l}}\delta\bigg)\\
\end{split}
\end{equation}
Then, by \eqref{ecequivConv1}, \eqref{ecequivConv2}, and taking into account that $T_1=\widehat{T}_1$ is given by \eqref{temp0N}, we have
\begin{equation}
\begin{split}
\widehat{T}_k&=\frac{k_s \al_l Q\bigg(\frac{\sqrt{\al_s}}{\sqrt{\al_l}}\delta\bigg)\widehat{T}_k + k_l \al_s Q_1(\delta)\widehat{T}_k}{k_s \al_l Q\bigg(\frac{\sqrt{\al_s}}{\sqrt{\al_l}}\delta\bigg) + k_l \al_s Q_1(\delta)}\\
&=\frac{k_s \al_l Q\bigg(\frac{\sqrt{\al_s}}{\sqrt{\al_l}}\delta\bigg)\widehat{T}_k -\frac{h_0 k_s \al_l \sqrt{\pi\al_s}erf(\delta)}{k_s+h_0\sqrt{\pi\al_s}erf(\delta)}Q\bigg(\frac{\sqrt{\al_s}}{\sqrt{\al_l}}\delta\bigg) (\widehat{T}_k-T_\infty)}{k_s \al_l Q\bigg(\frac{\sqrt{\al_s}}{\sqrt{\al_l}}\delta\bigg) + k_l \al_s Q_1(\delta)}\\
&\quad+\frac{k_l \al_s Q_1(\delta) T_0 +\gamma \rho \al_l\al_s Q_1(\delta) Q\bigg(\frac{\sqrt{\al_s}}{\sqrt{\al_l}}\delta\bigg)}{k_s \al_l Q\bigg(\frac{\sqrt{\al_s}}{\sqrt{\al_l}}\delta\bigg) + k_l \al_s Q_1(\delta)}\\
&=\frac{k_s \al_l Q\bigg(\frac{\sqrt{\al_s}}{\sqrt{\al_l}}\delta\bigg)\widehat{T}_1+  k_l \al_s Q_1(\delta) T_0 +\gamma \rho \al_l\al_s Q_1(\delta) Q\bigg(\frac{\sqrt{\al_s}}{\sqrt{\al_l}}\delta\bigg)}{k_s \al_l Q\bigg(\frac{\sqrt{\al_s}}{\sqrt{\al_l}}\delta\bigg) + k_l \al_s Q_1(\delta)}\\
&=G(\delta).
\end{split}
\end{equation}
Thus, by the uniqueness of the pair $\widetilde{T}_k,\mu$ in \eqref{condTkN}, $\mu=\delta$, and $\widetilde{T}_k=\widehat{T}_k$.

On the other hand, taking into account that $\delta=\mu$, we have
\begin{equation*}
\begin{split}
D_s^T(x)&=\mathscr{A}_s^T(x), \quad \forall x>0, \quad\quad\quad D_l^T(x)=\mathscr{A}_l^T(x), \quad \forall x>0,\\
D_s^C(x)&=\mathscr{A}_s^C(x), \quad \forall x>0, \quad\quad\quad D_l^C(x)=\mathscr{A}_l^C(x), \quad \forall x>0,\\
E_s^T(x)&=\mathscr{B}_s^T(x), \quad \forall x>0, \quad\quad\quad E_l^T(x)=\mathscr{B}_l^T(x), \quad \forall x>0,\\
E_s^C(x)&=\mathscr{B}_s^C(x), \quad \forall x>0, \quad\quad\quad E_l^C(x)=\mathscr{B}_l^C(x), \quad \forall x>0,\\
\end{split}
\end{equation*}
that is \eqref{TyCtildesN}.
\end{proof}
\begin{corollary}
Under the hypothesis of Theorem \ref{equiv1N}, the coefficient $\mu$ of the free boundary $\widetilde{s}(t)$ and the initial temperature $\widetilde{T}_k$ of solidification, corresponding to the Rubinstein solution to the free boundary problem $(P_R)$ satisfies the inequalities
\begin{equation}\label{desigserflambdaN}
\frac{\sqrt{ \al_l}}{\sqrt{\al_s}}\frac{k_s}{k_l}\frac{\widetilde{T}_k-T_1}{T_0-T_{0s}}\frac{T_{0s}-T_\infty}{T_1-T_\infty}<erf(\mu)<\frac{\sqrt{ \al_l}}{\sqrt{\al_s}}\frac{k_s}{k_l}\frac{\widetilde{T}_k-T_1}{T_0-T_{0l}}\frac{T_{0l}-T_\infty}{T_1-T_\infty}.
\end{equation}
\end{corollary}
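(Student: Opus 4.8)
The plan is to reproduce, in the convective setting, the argument used for the corollary following Theorem \ref{equivtheo}: I would invert the fixed-face temperature relation \eqref{temp0N} to express the coefficient $h_0$ as an explicit function of $\mu$ and $\widetilde{T}_k$, and then insert that expression into the admissibility window \eqref{cotash0} that $h_0$ must satisfy. Theorem \ref{equiv1N} guarantees $\delta=\mu$ and $\widehat{T}_k=\widetilde{T}_k$, so throughout I may pass freely between the symbols of problem $(P_2)$ and those of the Rubinstein problem $(P_R)$, and the temperature $T_1=\widehat{T}_1$ at $x=0$ is common to both.

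First I would solve \eqref{temp0N} for $h_0$. Writing \eqref{temp0N} as $\widehat{T}_k-T_1=\dfrac{h_0\sqrt{\pi\al_s}\,erf(\delta)\,(\widehat{T}_k-T_\infty)}{k_s+h_0\sqrt{\pi\al_s}\,erf(\delta)}$, I clear the denominator and treat $h_0\sqrt{\pi\al_s}\,erf(\delta)$ as a single grouped unknown; upon expanding, the products $h_0\sqrt{\pi\al_s}\,erf(\delta)\,\widehat{T}_k$ cancel and one is left with the linear relation $h_0\sqrt{\pi\al_s}\,erf(\delta)\,(T_1-T_\infty)=k_s(\widehat{T}_k-T_1)$. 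Using $\delta=\mu$ and $\widehat{T}_k=\widetilde{T}_k$ this yields
\[
h_0=\frac{k_s(\widetilde{T}_k-T_1)}{(T_1-T_\infty)\sqrt{\pi\al_s}\,erf(\mu)}.
\]
Substituting this into \eqref{cotash0} and solving each of the two resulting inequalities for $erf(\mu)$ then produces exactly the two bounds of \eqref{desigserflambdaN}.

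The step that requires care — and the only genuine obstacle — is checking that every cross-multiplication preserves the sense of the inequalities, since the bounds \eqref{cotash0} and \eqref{desigserflambdaN} are manipulated by multiplying and dividing through several temperature differences. I would first record the ordering $T_\infty<T_{0s}<T_{0l}<T_0$: the inequalities $T_0-T_{0s}>0$, $T_0-T_{0l}>0$, together with $T_{0s}<T_{0l}$, follow from the admissibility of \eqref{cotash0}, while positivity of the upper bound there (needed for $h_0>0$) forces $T_\infty<T_{0s}$. Next, from the displayed relation $h_0\sqrt{\pi\al_s}\,erf(\delta)\,(T_1-T_\infty)=k_s(\widehat{T}_k-T_1)$ one reads off $T_1-T_\infty>0$, because its right-hand side is positive by $\widehat{T}_1<\widehat{T}_k$ and the factors $h_0,\,erf(\delta)$ on the left are positive. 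With all of $T_1-T_\infty$, $T_{0s}-T_\infty$, $T_{0l}-T_\infty$, $T_0-T_{0s}$, $T_0-T_{0l}$, $\widetilde{T}_k-T_1$ and $erf(\mu)$ positive, the rearrangement of \eqref{cotash0} into \eqref{desigserflambdaN} is routine; as a consistency check, the resulting lower bound is indeed smaller than the upper bound, since the ratios $\frac{T_0-T_{0s}}{T_0-T_{0l}}$ and $\frac{T_{0l}-T_\infty}{T_{0s}-T_\infty}$ both exceed $1$.
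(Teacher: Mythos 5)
Your proposal is correct and follows essentially the same route as the paper: invert the fixed-face temperature relation \eqref{temp0N} to get $h_0=\frac{k_s(\widetilde{T}_k-T_1)}{(T_1-T_\infty)\sqrt{\pi\al_s}\,erf(\mu)}$ and substitute this into the admissibility window \eqref{cotash0}, then solve for $erf(\mu)$. Your additional verification that all the temperature differences involved are positive (so the cross-multiplications preserve the inequalities) is a point the paper leaves implicit, but it does not change the argument.
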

\begin{proof}
From the solution to problem $P_R$ equivalent to problem $P_2$, by using condition \eqref{PS-ABalt}-v, we obtain,
\begin{equation*}
h_0=\frac{k_s}{\sqrt{\al_s \pi} erf(\mu)}\frac{\widetilde{T}_k-T_1}{T_1-T_\infty}.
\end{equation*}
By the equivalence between problems $(P_2)$ and $(P_R)$, $h_0$ must satisfy inequalities \eqref{cotash0}.
Then, the next inequality holds:
\begin{equation}
\frac{(T_0-T_{0l})k_l}{(T_{0l}-T_\infty)\sqrt{\pi \al_l}}<\frac{k_s}{\sqrt{\al_s \pi} erf(\mu)}\frac{\widetilde{T}_k-T_1}{T_1-T_\infty}<\frac{(T_0-T_{0s})k_l}{(T_{0s}-T_\infty)\sqrt{\pi \al_l}},
\end{equation}
or equivalently \eqref{desigserflambdaN}.
\end{proof}
If we wish to obtain an inequality for the coefficient $\mu$ of the Rubinstein solution we must eliminate the dependence on $T_\infty$. Taking the maximum of $\frac{T_{0s}-T_\infty}{T_1-T_\infty}$ respect to $T_\infty$ on the left hand side, and the minimum of $\frac{T_{0l}-T_\infty}{T_1-T_\infty}$ respect to $T_\infty$ on the right hand side in \eqref{desigserflambdaN}, we get
\begin{remark}
\begin{equation}\label{desigserflambdaNlim}
\frac{\sqrt{ \al_l}}{\sqrt{\al_s}}\frac{k_s}{k_l}\frac{\widetilde{T}_k-T_1}{T_0-T_{0s}}<erf(\mu)<\frac{\sqrt{ \al_l}}{\sqrt{\al_s}}\frac{k_s}{k_l}\frac{\widetilde{T}_k-T_1}{T_0-T_{0l}}.
\end{equation}
which is the same inequalities given by \eqref{desigserflambda}. Therefore, the Remark \ref{Remark2} holds.
\end{remark}
\begin{remark}
The results in this section generalizes \cite{Ta:2017}. In fact, if $C(x,t)$ is constant and if we consider only the thermal problem, then condition \eqref{cotash0} is the inequality given in \cite[Theorem 1]{Ta:2017} obtained for the two-phase Stefan problem with a heat flux condition at $x=0$.
\end{remark}

\section{Conclusion}
Two explicit solutions of a similarity type for the temperature and the concentration in the two-phase binary-alloy solidification problem in a semi-infinite material were obtained for two different boundary conditions at the fixed face $x=0$. When the boundary condition is the heat flux condition given by \eqref{PS-AB}(v) then there exists an instantaneous solidification process if and only if the coefficient $q_0$ satisfies the inequalities
\begin{equation*}
\frac{(T_0-T_{0l})k_l}{\sqrt{\pi \al_l}}<q_0<\frac{(T_0-T_{0s})k_l}{\sqrt{\pi \al_l}}.
\end{equation*}
When the boundary condition is a convective condition given by \eqref{PS-ABalt}(v) then there exists an instantaneous solidification process if and only if the coefficient $h_0$ satisfies the inequalities
\begin{equation*}
\frac{(T_0-T_{0l})k_l}{(T_{0l}-T_1)\sqrt{\pi \al_l}}<h_0<\frac{(T_0-T_{0s})k_l}{(T_{0s}-T_1)\sqrt{\pi \al_l}}.
\end{equation*}
Moreover, if $\mu$ is the coefficient of the free boundary corresponding to the Rubinstein solution \eqref{RubSol}-\eqref{condTkN} to the free boundary problem $(P_R)$, then it satisfies the inequalities
\begin{equation}
\frac{\sqrt{ \al_l}}{\sqrt{\al_s}}\frac{\widetilde{T}_k-T_1}{T_0-T_{0s}}\frac{k_s}{k_l}<erf(\mu)<\frac{\sqrt{ \al_l}}{\sqrt{\al_s}}\frac{\widetilde{T}_k-T_1}{T_0-T_{0l}}\frac{k_s}{k_l},
\end{equation}


\section{Acknowledgements}
This work was partially supported by the Projects PIP N$^\circ$ 0275 from CONICET--Universidad Austral (Rosario, Argentina) and European Unions Horizon 2020 research and innovation programme under the Marie Sklodowska-Curie Grant Agreement N$^\circ$ 823731 CONMECH.


\bibliographystyle{plain}

\bibliography{Biblio}

\end{document}